\begin{document}

\newcommand{\A}{\mbox{${{{\cal A}}}$}}
\newcommand{\Mm}{{\cal{M}}^{\mu}}
\newcommand{\Mn}{{\cal{M}}^{\nu}}


\author{Attila Losonczi}
\title{Measures by means, means by measures}

\date{\today}

\newtheorem{thm}{\qquad Theorem}[section]
\newtheorem{prp}[thm]{\qquad Proposition}
\newtheorem{lem}[thm]{\qquad Lemma}
\newtheorem{cor}[thm]{\qquad Corollary}
\newtheorem{rem}[thm]{\qquad Remark}
\newtheorem{ex}[thm]{\qquad Example}
\newtheorem{df}[thm]{\qquad Definition}
\newtheorem{prb}{\qquad Problem}

\maketitle

\begin{abstract}

\noindent

We construct measure which determines a two variable mean in a very natural way. Using that measure we can extend the mean to infinite sets as well. E.g. we can calculate the geometric mean of any set with positive Lebesgue measure. We also study the properties and behavior of such generalized means that are obtained by a measure, and we provide some applications as well.

\noindent
\footnotetext{\noindent
AMS (2010) Subject Classifications: 28A10,28A20,26E60 \\

Key Words and Phrases: generalized mean, Borel measure, Lebesgue-Stieltjes measure}

\end{abstract}

\section{Introduction}
This paper can be considered as a natural continuation of the investigations started in \cite{lamis} and \cite{lamisii} where we started to build the theory of means on infinite sets. An ordinary mean is for calculating the mean of two (or finitely many) numbers. This can be extended in many ways in order to get a more general concept where we have a mean on some infinite subsets of $\mathbb{R}$. The various general properties of such means, the relations among those means were studied thoroughly in \cite{lamis} and \cite{lamisii}. 

In this paper we look for the answer the following question. How can one generalize two variable means (e.g. the geometric mean) to Lebesgue measurable subsets of $\mathbb{R}$. I.e. can we calculate the geometric mean of a set with positive Lebesgue measure? We are going to answer this question on measure theoretic ground.

In the first part of the paper we investigate means that are created by measures on $\mathbb{R}$. We enumerate many properties of such means and we also study uniqueness.

In the second part of the paper we fulfill our main aim that is to find a measure that generates a given two variable mean. We prove that under some basic smoothness conditions the generating measure always exists. 

Based on that result we provide an interesting application to two variable means. We show that such two variable means ${\cal{K}}(a,b)$ are determined by the values taken at points $(1,x)$ only,  i.e. ${\cal{K}}(a,b)$ can be calculated from $f(x)={\cal{K}}(1,x)$ in a generic way. 

We also investigate some alternative ways how one can generate the given two variable mean. 

One may ask whether some properties of a two variable mean are inherited to the associated generalized mean. In this respect we show that the inequality between the arithmetic and geometric means remains valid for the associated generalized means too.

In the last section we analyze the behavior of such means in infinity and show a sufficient condition for a mean approaching the arithmetic mean in infinity. 

\subsection{Basic notions and notations}
For $K\subset\mathbb{R},\ y\in\mathbb{R}$ we use the notation $K^{y-}=K\cap(-\infty,y],\ K^{y+}=K\cap[y,+\infty).$

If $H\subset\mathbb{R},x\in\mathbb{R}$, then set $H+x=\{h+x:h\in H\}$.
We use the convention that this operation $+$ has to be applied prior to the set theoretical operations, e.g. $H\cup K\cup L+x=H\cup K\cup (L+x)$.

\medskip

Let ${\cal{K}}$ be a \textbf{two variable mean}.
${\cal{K}}$ is called symmetric if ${\cal{K}}(a,b)={\cal{K}}(b,a)$. It is strictly internal if $a<{\cal{K}}(a,b)<b$ whenever $a<b$. It is called continuous if it is a continuous 2-variable function of $a$ and $b$.

\medskip

Now we recall some basic notions and notations for generalized means from \cite{lamis} and \cite{lamisii}. Please note that here we are dealing with bounded sets only.

A \textbf{generalized mean} is a function ${\cal{K}}:C\to \mathbb{R}$ where $C\subset \mathrm{P}(\mathbb{R})$ consists of some (finite or infinite) subsets of $\mathbb{R}$ and $\inf H\leq {\cal{K}}(H)\leq\sup H$ holds for all $H\in C$.

In the sequel we will mostly omit the adjective ''generalized'' as from the context it will be clear.

\medskip

A mean ${\cal{K}}$ is called \textbf{internal} if $\inf H\leq {\cal{K}}(H)\leq\sup H.$ It is \textbf{strongly internal} if $\varliminf H\leq {\cal{K}}(H)\leq\varlimsup H$ where $\varliminf H=\min H',\varlimsup H=\max H'$ (where $H'$ is the accumulation points of $H$).

$\cal{K}$ has property \textbf{strict strong internality} if it is strongly internal and $\varliminf H<{\cal{K}}(H)<\varlimsup H$ whenever $H$ has at least 2 accumulation points.

$\cal{K}$ is \textbf{disjoint-monotone} if $H_1\cap H_2=\emptyset,{\cal{K}}(H_1)\leq{\cal{K}}(H_2)$, then ${\cal{K}}(H_1)\leq{\cal{K}}(H_1\cup H_2)\leq {\cal{K}}(H_2)$.

${\cal{K}}$ is \textbf{union-monotone} if $B\cap C=\emptyset$,
${\cal{K}}(A)\leq{\cal{K}}(A\cup B),{\cal{K}}(A)\leq{\cal{K}}(A\cup C)$ implies ${\cal{K}}(A)\leq{\cal{K}}(A\cup B\cup C)$ 
and 
${\cal{K}}(A\cup B)\leq{\cal{K}}(A),{\cal{K}}(A\cup C)\leq {\cal{K}}(A)$ implies ${\cal{K}}(A\cup B\cup C)\leq{\cal{K}}(A)$.
Moreover if any of the inequalities on the left hand side is strict, then so is the inequality on the right hand side.

${\cal{K}}$ is \textbf{bi-slice-continuous} if $H\in \mathrm{Dom}({\cal{K}})$, then $H^{\varliminf H+},H^{\varlimsup H-}\in \mathrm{Dom}({\cal{K}})$ and 
$f(x,y)={\cal{K}}(H^{x-}\cup H^{y+})$ is continuous where $\mathrm{Dom}(f)=\{(x,y):H^{x-}\cup H^{y+}\in \mathrm{Dom}({\cal{K}})\}$.

${\cal{K}}$ is \textbf{Cantor-continuous} if $H_i\in \mathrm{Dom}({\cal{K}}), H_{i+1}\subset H_i,\ \cap_{n=1}^{\infty}H_i\in \mathrm{Dom}({\cal{K}})$ implies that ${\cal{K}}(H_i)\to {\cal{K}}(\cap_{n=1}^{\infty}H_i)$. 

\medskip

Throughout this paper $\lambda$ will denote the Lebesgue measure.

If $H$ is bounded, Lebesgue measurable, $\lambda(H)>0$, then \[\mathrm{Avg}(H)=\frac{\int\limits_H x\ d\lambda}{\lambda(H)}.\]

If $f:\mathbb{R}\to\mathbb{R}$ is an increasing continuous function, then let $\mu_f$  be the Lebesgue-Stieltjes measure belonging to $f$, i.e. the Carathéodory extension of $\mu([a,b))=f(b)-f(a)\ (a<b)$.

\medskip

Each measure considered in this paper will be from a special class of Borel measures that we define here.

\begin{df}A measure $\mu$ is called \textbf{p-Borel measure} if it is a Borel measure on some interval (finite or infinite) of the real line $\mathbb{R}$ that satisfies two conditions: 
\par (1) if $H\subset\mathbb{R}$ is bounded and measurable, then $\mu(H)<+\infty$,
\par (2) if $I$ is a non degenerative interval, then $0<\mu(I)$.\qed
\end{df}

Let us remark that if $H\subset\mathbb{R}$ is bounded, then $\mu|_H$ is absolutely continuous with respect to $\lambda$ iff it is $\varepsilon-\delta$ absolutely continuous ($\forall\varepsilon>0\ \exists\delta>0$ such that $\lambda(K)<\delta$ implies that $\mu(K)<\varepsilon$).

\section{Means by measures}

We now define the main notion of the paper, that is how a measure determines naturally a generalized mean.

\begin{df}Let $I\subset\mathbb{R}$ be an interval (finite or infinite). Let $\mu$ be a p-Borel measure on $I$. Let $H$ be a bounded $\mu$-measurable set such that $0<\mu(H)<+\infty$. Set 
\[{\cal{M}}^{\mu}(H)=\frac{\int\limits_Hx d\mu}{\mu(H)}.\tag*{\qed}\]
\end{df}

Now our aim will be to enumerate many properties of this generalized mean.

\begin{prp}${\cal{M}}^{\mu}$ is internal. \qed
\end{prp}

\begin{prp}If $\mu(H)=0$ whenever $H$ is finite, then $\Mm$ is strongly-internal.
\end{prp}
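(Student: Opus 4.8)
The plan is to use the hypothesis only through its consequence that every finite subset of $\mathbb{R}$ is $\mu$-null, and to reduce the integral defining $\Mm(H)$ to the interval spanned by the extreme accumulation points of $H$.

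First I would note that $H$ must be infinite: since $0<\mu(H)$ while every finite set is $\mu$-null, $H$ cannot be finite. As $H$ is bounded, Bolzano--Weierstrass guarantees that the derived set $H'$ is nonempty; being the set of accumulation points it is closed, and it is bounded, so $m:=\varliminf H=\min H'$ and $M:=\varlimsup H=\max H'$ are well defined. It therefore suffices to prove $m\leq\Mm(H)\leq M$.

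The heart of the argument is to show that the two tails $H\cap(-\infty,m)$ and $H\cap(M,+\infty)$ carry no $\mu$-mass. Fix any $m'<m$. The set $H\cap(-\infty,m']$ is bounded and has no accumulation point: any such point would lie in $H'$ (it accumulates $H$) and in the closed set $(-\infty,m']$, hence in $H'\cap(-\infty,m']=\emptyset$, the intersection being empty because $m=\min H'>m'$. A bounded set with no accumulation point is finite, so $\mu\bigl(H\cap(-\infty,m']\bigr)=0$ by hypothesis. Writing $H\cap(-\infty,m)=\bigcup_{n=1}^{\infty}\bigl(H\cap(-\infty,m-\tfrac1n]\bigr)$ as an increasing union and using continuity of $\mu$ from below gives $\mu\bigl(H\cap(-\infty,m)\bigr)=0$; the symmetric argument yields $\mu\bigl(H\cap(M,+\infty)\bigr)=0$.

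Finally I would assemble these facts. Since both tails are $\mu$-null, neither the total mass nor the first moment changes when they are discarded: $\mu(H)=\mu\bigl(H\cap[m,M]\bigr)$ and $\int_H x\,d\mu=\int_{H\cap[m,M]}x\,d\mu$. On $H\cap[m,M]$ one has $m\leq x\leq M$ pointwise, so integrating and dividing by $\mu(H)=\mu\bigl(H\cap[m,M]\bigr)>0$ yields $m\leq\Mm(H)\leq M$, which is exactly strong internality. The only delicate step is converting the purely topological statement that nothing accumulates below $m$ or above $M$ into the measure-theoretic statement that the tails are null, and this is precisely where the hypothesis on finite sets, together with continuity of $\mu$ from below, is indispensable.
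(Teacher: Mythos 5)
Your proof is correct, but it follows a genuinely different route from the paper's. The paper disposes of the statement in one line by citing Proposition 2 of \cite{lamisii}, which asserts that a mean that is internal and finite-independent is strongly internal, and by observing that the hypothesis ``$\mu$ vanishes on finite sets'' is precisely finite independence for $\Mm$ (deleting or adjoining finitely many points changes neither $\mu(H)$ nor $\int_H x\,d\mu$). You instead reprove the needed implication from scratch in this measure-theoretic setting: $\mu(H)>0$ forces $H$ to be infinite, so $m=\varliminf H$ and $M=\varlimsup H$ exist since $H'$ is nonempty, closed and bounded; for each $m'<m$ the set $H\cap(-\infty,m']$ is bounded with no accumulation point, hence finite, hence $\mu$-null, and the increasing union then gives $\mu\bigl(H\cap(-\infty,m)\bigr)=0$, symmetrically above $M$; finally the mass and the first moment of $H$ are carried entirely by $H\cap[m,M]$, where $m\leq x\leq M$ pointwise, so dividing by $\mu(H)>0$ yields $m\leq\Mm(H)\leq M$. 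Every step checks out, including the measurability of the slices and the vanishing of the moment integrals over null sets (the integrand $x$ is bounded on the bounded set $H$). One small remark: your care in keeping the endpoints $m,M$ inside the retained set is not actually needed, since under your hypothesis singletons are null; indeed, by countable additivity the hypothesis already forces all countable sets to be null, so it coincides with the hypothesis of the paper's subsequent proposition on strict strong internality. What your argument buys is self-containedness and an explicit picture of why the tails beyond the extreme accumulation points are invisible to $\Mm$; what the paper's route buys is brevity and the reuse of a general principle about means that is not tied to means generated by measures.
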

\begin{proof} By \cite{lamisii} Proposition 2 it is enough to prove finite independence and internality. The condition is equivalent to finite independence.
\end{proof}

\begin{lem}\label{ldist0}Let $H_1,H_2\in \mathrm{Dom}(\Mm)$, $\mu((H_1-H_2)\cup (H_2-H_1))=0$. Then $\Mm(H_1)=\Mm(H_2)$.
\end{lem}
\begin{proof}Clearly \[|\Mm(H_1)-\Mm(H_2)|=\left\lvert\frac{\int\limits_{H_1} x d\mu}{\mu(H_1)}-\frac{\int\limits_{H_2} x d\mu}{\mu(H_2)}\right\lvert=
\left\lvert\frac{\int\limits_{H_1} x d\mu}{\mu(H_1)}-\frac{\int\limits_{H_1} x d\mu+\int\limits_{H_2-H_1} x d\mu}{\mu(H_2)}\right\lvert\leq\]
\[\left\lvert\int\limits_{H_1} x d\mu\right\lvert\left\lvert\frac{1}{\mu(H_1)}-\frac{1}{\mu(H_2)}\right\lvert+\left\lvert\frac{\int\limits_{H_2-H_1} x d\mu}{\mu(H_2)}\right\lvert=
\left\lvert\int\limits_{H_1} x d\mu\right\lvert\left\lvert\frac{\mu(H_2)-\mu(H_1)}{\mu(H_1)\mu(H_2)}\right\lvert+\left\lvert\frac{\int\limits_{H_2-H_1} x d\mu}{\mu(H_2)}\right\lvert=0.\] 
\end{proof}

\begin{prp}If $\mu(H)=0$ whenever $H$ is countable, then $\Mm$ is strict-strong-internal.
\end{prp}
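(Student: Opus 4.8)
The plan is to split the claim into its two constituent parts. Strong internality comes for free: the hypothesis ``$\mu(H)=0$ whenever $H$ is countable'' is strictly stronger than the hypothesis of the previous proposition, since every finite set is countable. Hence $\Mm$ is strongly internal, and in particular $\varliminf H\le\Mm(H)\le\varlimsup H$ for every $H\in Dom(\Mm)$. What remains is to upgrade these two weak inequalities to strict ones under the additional assumption that $H$ has at least two accumulation points.

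Write $m=\varliminf H=\min H'$ and $M=\varlimsup H=\max H'$; having at least two accumulation points means precisely $m<M$. The first step I would carry out is to observe that the integral defining $\Mm(H)$ is unaffected by the part of $H$ lying outside $[m,M]$. Indeed, any point $p\in H$ with $p<m$ or $p>M$ cannot belong to $H'$ (otherwise it would contradict $m=\min H'$ or $M=\max H'$), so $p$ is an isolated point of $H$. Since the isolated points of a subset of $\mathbb{R}$ form a countable set, they carry $\mu$-measure $0$ by hypothesis. Consequently $\mu(H)=\mu(H\cap[m,M])$ and $\int_H x\,d\mu=\int_{H\cap[m,M]}x\,d\mu$, so that $\Mm(H)=\frac{\int_{H\cap[m,M]}x\,d\mu}{\mu(H\cap[m,M])}$, where the denominator equals $\mu(H)>0$.

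The heart of the argument is then to show that the $\mu$-mass cannot concentrate at the endpoint. Since $\{m\}$ is a single point, $\mu(\{m\})=0$, whence $\mu(H\cap(m,M])=\mu(H\cap[m,M])=\mu(H)>0$. On $H\cap(m,M]$ the integrand satisfies $x-m>0$, so $\int_{H\cap[m,M]}(x-m)\,d\mu=\int_{H\cap(m,M]}(x-m)\,d\mu>0$, which rearranges to $\Mm(H)>m$. The inequality $\Mm(H)<M$ follows by the symmetric computation, using $\mu(\{M\})=0$ to get $\mu(H\cap[m,M))>0$ together with the sign $x-M<0$ there.

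The only place that genuinely requires care---and the main obstacle---is the first step: without the countability hypothesis, an isolated point far below $m$ (or above $M$) could in principle drag the average outside $[m,M]$, so strong internality by itself does not deliver the strict version. The hypothesis is used exactly here, both to discard the isolated points outside $[m,M]$ and to give the endpoints $\{m\},\{M\}$ measure $0$ so that positive mass necessarily spills into the open interval; everything else is the elementary fact that a strictly positive integrand has strictly positive integral over a set of positive measure.
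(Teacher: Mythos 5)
Your proof is correct and takes essentially the same route as the paper's: reduce to $H\cap[\varliminf H,\varlimsup H]$ by noting that the hypothesis makes the countable part of $H$ outside that interval $\mu$-null (the paper states $\mu(H^{-a})=\mu(H^{+b})=0$ without your isolated-points justification, which is a nice detail to supply), then show the mass cannot concentrate at an endpoint. The only cosmetic difference is in the last step, where the paper picks an explicit cut point $c\in(a,b)$ with $\mu(H\cap[c,b])>0$ and estimates a weighted average, while you invoke the equivalent fact that a strictly positive integrand on a set of positive $\mu$-measure has strictly positive integral.
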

\begin{proof} Let $H\in \mathrm{Dom}(\Mm), a=\varliminf H,b=\varlimsup H$. By $\mu(H^{a-})=\mu(H^{b+})=0$ we get that $\Mm([a,b])=\Mm(H)$. Obviously there is $c\in (a,b)$ such that $\mu(H\cap[c,b])>0$. Let $H_1=H\cap[a,c),H_2=H\cap[c,b]$. Then $\Mm(H)\geq \frac{\mu(H_1)a+\mu(H_2)c}{\mu(H)}=\frac{\mu(H_1)}{\mu(H)}a+\frac{\mu(H_2)}{\mu(H)}c>a$ because it is a weighted average and $\frac{\mu(H_2)}{\mu(H)}>0$.

The other inequality can be shown similarly.
\end{proof}

Exactly the same way one can show:

\begin{prp}Let $\mu$ is absolutely continuous with respect to $\lambda$, $H\in \mathrm{Dom}(\Mm)$. Let $a=\sup\{x\in\mathbb{R}:\lambda(H^{x-})=0\}, b=\inf\{x\in\mathbb{R}:\lambda(H^{x+})=0\}$ (cf. \cite{lamis} Definition 4). Then $a<\Mm(H)<b$. \qed
\end{prp}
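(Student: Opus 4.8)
The plan is to follow the proof of the preceding proposition on strict strong internality almost verbatim; the only genuinely new point is to check that the two boundary pieces $H^{-a}$ and $H^{+b}$ are $\mu$-null, after which the weighted-average estimate transfers unchanged.

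First I would pin down the numbers $a$ and $b$. Set $S=\{x:\lambda(H^{-x})=0\}$. Since $H^{-y}\subset H^{-x}$ for $y<x$, the set $S$ is downward-closed, so for every $x<a=\sup S$ one has $\lambda(H^{-x})=0$. Writing $H^{-a}=\big(\bigcup_{n}H^{-(a-1/n)}\big)\cup(H\cap\{a\})$ as a countable union of $\lambda$-null sets together with a single point, I get $\lambda(H^{-a})=0$; the symmetric argument with $T=\{x:\lambda(H^{+x})=0\}$, which is upward-closed with $b=\inf T$, yields $\lambda(H^{+b})=0$. Because $\mu$ is absolutely continuous with respect to $\lambda$, both boundary pieces are then $\mu$-null: $\mu(H^{-a})=\mu(H^{+b})=0$.

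With this in hand the remaining steps copy the previous proof. Since $\mu(H)>0$ and $\mu\ll\lambda$ force $\lambda(H)>0$, and since $H=H^{-a}\cup H^{+b}$ would follow whenever $a\ge b$, I would first conclude $a<b$. The symmetric difference of $H$ and $H\cap[a,b]$ lies in $H^{-a}\cup H^{+b}$ and is thus $\mu$-null, so Lemma \ref{ldist0} gives $\Mm(H)=\Mm(H\cap[a,b])$, while $\mu(H\cap(a,b))=\mu(H)>0$. Continuity of measure then produces some $c\in(a,b)$ with $\mu(H\cap[c,b])>0$. Splitting $H_1=H\cap[a,c)$ and $H_2=H\cap[c,b]$ and using $x\ge a$ on $H_1$ and $x\ge c$ on $H_2$, one obtains
$$\Mm(H)\geq\frac{\mu(H_1)}{\mu(H)}a+\frac{\mu(H_2)}{\mu(H)}c>a,$$
since this is a weighted average with $\mu(H_2)/\mu(H)>0$ and $c>a$. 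The inequality $\Mm(H)<b$ follows by the mirror-image computation.

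The main obstacle is the second paragraph: everything else is literally the earlier argument, but there $\mu(H^{-a})=0$ could be read off directly from the hypothesis that $\mu$ annihilates countable sets, whereas here $a$ and $b$ are defined through $\lambda$, so I must route through the order structure of $S$ and $T$ and through absolute continuity to recover $\mu(H^{-a})=\mu(H^{+b})=0$. I would double-check the edge cases where $a$ or $b$ is attained (the single-point terms $H\cap\{a\}$ and $H\cap\{b\}$) and that $H\cap[a,b]\in Dom(\Mm)$, which holds because $\mu(H\cap[a,b])=\mu(H)\in(0,+\infty)$.
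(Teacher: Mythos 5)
Your proposal is correct and takes essentially the same route as the paper: the paper gives no separate proof, stating only that it goes ``exactly the same way'' as the strict strong internality proposition, and your argument is precisely that adaptation, with the one genuinely new ingredient --- deducing $\mu(H^{-a})=\mu(H^{+b})=0$ from the order structure of $\{x:\lambda(H^{-x})=0\}$ and $\{x:\lambda(H^{+x})=0\}$ together with absolute continuity --- supplied correctly. Your extra checks ($a<b$, $H\cap[a,b]\in Dom(\Mm)$, the choice of $c$ by continuity of measure) are all sound and fill in details the paper leaves implicit.
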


From the countable additive property of the integral we immediately get that the mean is a natural weighted average of values on disjoint sets.

\begin{prp}\label{pmmwa}Let $H,H_i\in \mathrm{Dom}(\Mm)\ (i\in\mathbb{N}), H_i\cap H_j=\emptyset\ (i\ne j), H=\bigcup\limits_{i=1}^{\infty}H_i$. Then
\[\Mm(H)=\frac{\sum\limits_{i=1}^{\infty}\mu(H_i)\Mm(H_i)}{\sum\limits_{i=1}^{\infty}\mu(H_i)}.\tag*{\qed}\]
\end{prp}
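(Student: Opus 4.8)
The plan is to reduce the statement to two applications of countable additivity --- one for the measure $\mu$ in the denominator and one for the integral in the numerator --- after first securing that the relevant integral is finite. Since $H\in Dom(\Mm)$, the set $H$ is bounded, so there is an $M>0$ with $|x|\leq M$ for every $x\in H$; combined with $\mu(H)<+\infty$ this yields $\int_H|x|\,d\mu\leq M\mu(H)<+\infty$, so the identity function is $\mu$-integrable on $H$. Each $H_i\subset H$ is therefore also of finite measure and the same integrability holds on each piece. This integrability is precisely what licenses the manipulations that follow.

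For the denominator, countable additivity of $\mu$ applied to the disjoint family $\{H_i\}$ gives $\mu(H)=\sum_{i=1}^{\infty}\mu(H_i)$, which I note converges to the finite value $\mu(H)$. For the numerator, I would use countable additivity of the integral over a disjoint decomposition: writing $\mathbf{1}_H=\sum_{i=1}^{\infty}\mathbf{1}_{H_i}$ and applying dominated convergence to the partial sums $\sum_{i=1}^{n}x\,\mathbf{1}_{H_i}$ --- with dominating function $|x|\,\mathbf{1}_H$, which is integrable by the previous paragraph --- one obtains $\int_H x\,d\mu=\sum_{i=1}^{\infty}\int_{H_i}x\,d\mu$.

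Finally I would substitute the defining relation $\int_{H_i}x\,d\mu=\mu(H_i)\Mm(H_i)$ for each $i$ into the numerator and $\mu(H)=\sum_i\mu(H_i)$ into the denominator, giving $\Mm(H)=\bigl(\sum_i\mu(H_i)\Mm(H_i)\bigr)/\bigl(\sum_i\mu(H_i)\bigr)$ as claimed. The only step that requires genuine care is the interchange of summation and integration in the numerator; everything else is bookkeeping. Because the sets are bounded and of finite measure the dominating function is readily available, so I expect this interchange to be routine rather than a real obstacle, and the finite case follows by the same argument with all but finitely many terms zero.
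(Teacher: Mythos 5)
Your proof is correct, and it supplies exactly the routine justification the paper leaves implicit: the author states this proposition without proof (marking it \qed{} as obvious), the intended argument being precisely countable additivity of $\mu$ for the denominator and countable additivity of the integral for the numerator. Your care in securing the dominating function $|x|\,\mathbf{1}_H$ via boundedness of $H$ and finiteness of $\mu(H)$ is the right way to make the interchange of sum and integral rigorous, so there is nothing to add.
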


\begin{prp}\label{pmmcc}Let $H_n,H\in \mathrm{Dom}(\Mm)$, $H=\bigcap\limits_{n=1}^{\infty}H_n$. Then $\Mm(H_n)\to \Mm(H)$, i.e. $\Mm$ is Cantor-continuous. 
\end{prp}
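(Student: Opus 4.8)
The plan is to use that the Cantor-continuity hypothesis forces the sets to be nested, $H_{n+1}\subset H_n$, so that $(H_n)$ is a decreasing sequence with $\bigcap_{n=1}^{\infty} H_n=H$. Since $\Mm(H_n)=\frac{\int_{H_n}x\,d\mu}{\mu(H_n)}$ is a quotient, I would prove the numerator and the denominator converge separately to $\int_H x\,d\mu$ and $\mu(H)$ respectively. Because $H\in Dom(\Mm)$ guarantees $\mu(H)>0$, the limit of the denominator is nonzero, and the quotient of the limits then equals $\frac{\int_H x\,d\mu}{\mu(H)}=\Mm(H)$.

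For the denominator I would note that $H_1\in Dom(\Mm)$ is bounded and measurable, hence $\mu(H_1)<+\infty$ by the standing assumption on $\mu$. Continuity from above of a measure, valid precisely because $\mu(H_1)$ is finite, then yields $\mu(H_n)\to\mu\bigl(\bigcap_{n=1}^{\infty} H_n\bigr)=\mu(H)$.

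The crux is the numerator, which I would settle by dominated convergence. All the sets lie in the bounded set $H_1$, so there is a constant $M$ with $|x|\le M$ for every $x\in H_1$; consequently $|x\chi_{H_n}|\le M\chi_{H_1}$ for all $n$, and the dominating function $M\chi_{H_1}$ is $\mu$-integrable since $\int M\chi_{H_1}\,d\mu=M\,\mu(H_1)<+\infty$. As $H_n\downarrow H$ we have $x\chi_{H_n}\to x\chi_H$ pointwise, so the Dominated Convergence Theorem gives $\int_{H_n}x\,d\mu\to\int_H x\,d\mu$. Combining this with the convergence of the denominator finishes the argument.

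I expect the only real obstacle to be producing an integrable dominating function; this is exactly where the two standing hypotheses are used, namely finiteness of $\mu$ on bounded measurable sets together with the boundedness of every set in $Dom(\Mm)$. Once the bound $M\chi_{H_1}$ is in hand, the rest is routine. An alternative to invoking dominated convergence directly would be to write $\int_{H_1}x\,d\mu-\int_{H_n}x\,d\mu=\int_{H_1\setminus H_n}x\,d\mu$ and apply monotone convergence to the positive and negative parts over the increasing sets $H_1\setminus H_n\uparrow H_1\setminus H$, which amounts to the same estimate.
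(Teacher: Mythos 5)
Your proof is correct, and your reading of the hypothesis is the right one: the nestedness $H_{n+1}\subset H_n$ has to be imported from the paper's definition of Cantor-continuity (without it the statement is false, and the paper's own proof also uses $H\subset H_n$ silently). Structurally you and the paper perform the same reduction --- convergence of the numerator $\int_{H_n}x\,d\mu$ and of the denominator $\mu(H_n)$, then algebra --- but the key tool for the numerator differs. The paper uses no convergence theorem for integrals at all: it writes $\Mm(H_n)-\Mm(H)$ as a single fraction, uses $H\subset H_n$ to rewrite $\int_{H_n}x\,d\mu-\int_{H}x\,d\mu$ as $\int_{H_n\setminus H}x\,d\mu$, bounds this term by $\mu(H_n\setminus H)\sup H_1=\bigl(\mu(H_n)-\mu(H)\bigr)\sup H_1$, and bounds the denominator below by $\mu(H)^2$; after that, the single fact $\mu(H_n)\to\mu(H)$ (continuity from above, the same fact you use) finishes everything and yields as a by-product a quantitative estimate of the form $|\Mm(H_n)-\Mm(H)|\le C\bigl(\mu(H_n)-\mu(H)\bigr)$ with an explicit constant. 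You instead invoke the Dominated Convergence Theorem with dominating function $M\chi_{H_1}$ (or monotone convergence in your alternative), which is perfectly valid --- your check that $M\chi_{H_1}$ is integrable is exactly where the standing hypotheses enter --- but is heavier machinery than needed: since $|x|\le M$ on $H_1$, the one-line bound $\bigl|\int_{H_n\setminus H}x\,d\mu\bigr|\le M\,\mu(H_n\setminus H)\to 0$ already gives your numerator limit with no limit theorem for integrals. In short, same decomposition, different key step: the paper's argument is more elementary and comes with a rate, while yours is more modular and routine.
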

\begin{proof}Clearly $\displaystyle|\Mm(H_n)-\Mm(H)|=\left\lvert\frac{\int\limits_{H_n}x d\mu}{\mu(H_n)}-\frac{\int\limits_{H}x d\mu}{\mu(H)}\right\lvert=$
$$\left\lvert\frac{\mu(H)\left(\int\limits_{H_n}x d\mu-\int\limits_{H}x d\mu\right)+(\mu(H)-\mu(H_n))\int\limits_{H}x d\mu}{\mu(H_n)\mu(H)}\right\lvert=$$
$$\left\lvert\frac{\mu(H)\int\limits_{H_n-H}x d\mu+(\mu(H)-\mu(H_n))\int\limits_{H}x d\mu}{\mu(H_n)\mu(H)}\right\lvert\leq$$
$$\left\lvert\frac{\mu(H)\mu(H_n-H)\sup H_1}{\mu(H)^2}\right\lvert + 
\left\lvert\frac{(\mu(H)-\mu(H_n))\mu(H)\sup H}{\mu(H)^2}\right\lvert
\to 0$$
using that $\mu(H_n)\to\mu(H)$.
\end{proof}

\begin{df}Let ${\cal M}:{\cal H}\to\mathbb{R}$ be a set-function where ${\cal H}\subset \mathrm{P}(\mathbb{R})$. Let ${\cal H}'\subset {\cal H}$. We say that ${\cal M}|_{{\cal H}'}$
\textbf{determines} ${\cal M}$ if ${\cal N}:{\cal H}\to\mathbb{R}$ is given and ${\cal N}|_{{\cal H}'}={\cal M}|_{{\cal H}'}$ implies that ${\cal N}={\cal M}$.\qed
\end{df}

\begin{ex}It is well known that if $\mu$ is a Borel measure, then $\mu|_{{\cal H}'}$ determines $\mu$ where ${\cal H}'=\{(a,b):a,b\in\mathbb{R},\ a<b\}$.\qed
\end{ex}

Now we can show that the same is true for $\Mm$.

\begin{thm}\label{pMmdetbyvoi}If $\mu$ is a p-Borel measure, then $\Mm|_{{\cal H}'}$ determines $\Mm$ where ${\cal H}'=\{(a,b):a,b\in\mathbb{R},\ a<b\}$. 
\end{thm}
\begin{proof}For a given Borel set $H\subset\mathbb{R}$, by \cite{billings} 12.7 there is a sequence of sets $(H_n)$ such that $H\subset\bigcap\limits_{n=1}^{\infty}H_n,$ $\mu(H)=\mu\left(\bigcap\limits_{n=1}^{\infty}H_n\right)$ and $H_n$ is a countable union of disjoint open intervals. By Lemma \ref{ldist0}, Proposition \ref{pmmwa} and \ref{pmmcc} we get that $\Mm(H)$ is uniquely determined by $\langle\Mm((a,b)):a<b\rangle$. 
\end{proof}

We can naturally derive a two variable mean from $\Mm$.

\begin{df}Let $\Mm$ be given such that $[a,b]\in \mathrm{Dom}(\Mm),\ (a,b\in\mathbb{R}, a<b)$. Then set 
\[{\cal M}^{(\mu)}(a,b)={\cal M}^{(\mu)}(b,a)=\Mm([a,b])\text{\ \ and\ \ }{\cal M}^{(\mu)}(a,a)=a.\tag*{\qed}\]
\end{df}

\begin{ex}Let $f(x)=x^2,\mu=\mu_f$ defined on subsets of $\mathbb{R}^+\cup\{0\}$. Then ${\cal M}^{(\mu)}(a,b)=\frac{2}{3}\frac{a^2+ab+b^2}{a+b}$.
\end{ex}

\begin{ex}Let $f(x)=e^x,\mu=\mu_f$. Then ${\cal M}^{(\mu)}(a,b)=\frac{be^b-ae^a}{e^b-e^a}-1$.
\end{ex}

\begin{thm}If $H\in \mathrm{Dom}(\Mm)$, then $\Mm(H)$ is determined by all ${\cal M}^{(\mu)}(a,b)\ (a<b)$. 
\end{thm}
\begin{proof}By Theorem \ref{pMmdetbyvoi}, we only need that $\Mm\big((a,b)\big)$ is determined by all $\Mm([a',b'])\ (a'<b')$, which is well known.
\end{proof}

\begin{prp}Let $\Mm$ be given such that $[a,b]\in \mathrm{Dom}(\Mm),\ (a,b\in\mathbb{R}, a<b)$. Then ${\cal M}^{(\mu)}(a,b)$ is strictly-internal (i.e. it is a mean) and symmetric.
\end{prp}
\begin{proof}Property symmetric is trivial as we defined it that way.
\par It is strictly-internal because suppose otherwise
\[\frac{\int\limits_a^bx\ d\mu}{\mu(H)}=\frac{\int\limits_a^bx\ d\mu}{\int\limits_a^b1\ d\mu}=a.\]
This would imply that
$\int\limits_a^bx\ d\mu=\int\limits_a^ba\ d\mu$
hence
$\int\limits_a^bx-a\ d\mu=0$
which gives that $\mu([\frac{a+b}{2},b])=0$ which is a contradiction because all non-degenerative interval is in the domain of $\Mm$ i.e. $\mu$ does not vanish on them. 
\par If $\frac{\int\limits_a^bx\ d\mu}{\mu(H)}=b$, then a similar argument works.
\end{proof}

\begin{ex}${\cal M}^{(\mu)}$ is not necessarily continuous.
\par If $H\subset\mathbb{R}$ is a bounded Borel set, then let
\[\mu(H)=\begin{cases}
\lambda(H)&\text{if }2\notin H\\
\lambda(H)+1&\text{if }2\in H.
\end{cases}\]
Obviously $\mu$ is a measure on the Borel sets, and $f(y)={\cal M}^{(\mu)}(1,y)\ (y>1)$ is not continuous at $2$, because
$\lim\limits_{y\to 2-0}{\cal M}^{(\mu)}(1,y)=1.5$
while ${\cal M}^{(\mu)}(1,2)=\frac{1.5+1}{1+1}=1.25$.\qed
\end{ex}

\begin{prp}Let $\Mm$ be given such that $[a,b]\in \mathrm{Dom}(\Mm),\ (a,b\in\mathbb{R}, a<b)$ and let $f(y)={\cal M}^{(\mu)}(1,y)\ (y>1)$. Then $f$ is a.e. continuous.
\end{prp}
\begin{proof}Let $\nu(H)=\int\limits_Hx\ d\mu$. Evidently $\nu$ is absolute continuous w.r.t. $\mu$ and then it has a Radon-Nikodym derivative $g$ and $g(x)=x$ holds almost everywhere. It can be readily seen that where $g(x)=x$ holds, $f$ is continuous.
\end{proof}

Now we continue the investigation of properties of $\Mm$.

\begin{prp}$\Mm$ is disjoint-monotone. 
\end{prp}
\begin{proof} Let $H_1\cap H_2=\emptyset, \Mm(H_1)\leq \Mm(H_2)$. 
Then $\displaystyle\Mm(H_1\cup H_2)=\frac{\mu(H_1)\Mm(H_1)+\mu(H_2)\Mm(H_2)}{\mu(H_1\cup H_2)}\leq\frac{\mu(H_1)\Mm(H_2)+\mu(H_2)\Mm(H_2)}{\mu(H_1)+\mu(H_2)}=\Mm(H_2)$. The other inequality is similar.
\end{proof}

\begin{prp}$\Mm$ is union-monotone.
\end{prp}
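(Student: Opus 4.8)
The plan is to reduce the claim to the elementary fact that a $\mu$-weighted average of quantities all lying on one side of a value again lies on that side, with strictness inherited as soon as one averaged quantity with positive weight strictly exceeds that value. The engine is Proposition \ref{pmmwa}, which expresses $\Mm$ on a disjoint union as a $\mu$-weighted average of the $\Mm$-values of the pieces.

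First I would dispose of the overlaps. Only $B\cap C=\emptyset$ is assumed, so $A$ may meet $B$ and $C$; but $A\cup B=A\cup(B\setminus A)$, $A\cup C=A\cup(C\setminus A)$ and $A\cup B\cup C=A\cup(B\setminus A)\cup(C\setminus A)$, while the three sets $A$, $B\setminus A$, $C\setminus A$ are pairwise disjoint. Hence, replacing $B,C$ by $B\setminus A,C\setminus A$, I may assume $A,B,C$ pairwise disjoint without altering any set occurring in the hypotheses or the conclusion. If $\mu(B\setminus A)=0$ then $\Mm(A\cup B)=\Mm(A)$ by Lemma \ref{ldist0}, so that branch of the hypothesis is an equality and $A\cup B\cup C$ agrees with $A\cup C$ up to a $\mu$-null set; such degenerate pieces I would handle separately, again through Lemma \ref{ldist0}.

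With $A,B,C$ disjoint, Proposition \ref{pmmwa} gives $\Mm(A\cup B)=\dfrac{\mu(A)\Mm(A)+\mu(B)\Mm(B)}{\mu(A)+\mu(B)}$, so for $\mu(B)>0$ the hypothesis $\Mm(A)\le\Mm(A\cup B)$ is equivalent to $\Mm(A)\le\Mm(B)$, and likewise $\Mm(A)\le\Mm(A\cup C)$ is equivalent to $\Mm(A)\le\Mm(C)$. Applying Proposition \ref{pmmwa} to the triple union,
$$\Mm(A\cup B\cup C)=\frac{\mu(A)\Mm(A)+\mu(B)\Mm(B)+\mu(C)\Mm(C)}{\mu(A)+\mu(B)+\mu(C)},$$
a weighted average of $\Mm(A),\Mm(B),\Mm(C)$, each of which is $\ge\Mm(A)$; therefore the average is $\ge\Mm(A)$, which is the first implication. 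The second implication (all inequalities reversed) is verbatim the same. For the strictness clause, a strict hypothesis, say $\Mm(A)<\Mm(A\cup B)$, forces $\mu(B)>0$ and $\Mm(A)<\Mm(B)$, so the $B$-term contributes a strictly larger value with positive weight and the average is strictly above $\Mm(A)$.

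The only genuine care is the bookkeeping for degenerate pieces — when $B\setminus A$ or $C\setminus A$ has $\mu$-measure $0$ or is empty — so that Proposition \ref{pmmwa} is invoked only on true members of $Dom(\Mm)$ and so that strictness can never originate from a null piece. I expect this to be the main, though routine, obstacle, and it is handled uniformly by Lemma \ref{ldist0}.
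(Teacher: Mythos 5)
Your proof is correct and follows essentially the same route as the paper's: reduce to pairwise disjoint $A,B,C$, apply Proposition \ref{pmmwa} to write each union as a $\mu$-weighted average, and conclude that an average of values all $\geq \Mm(A)$ is $\geq \Mm(A)$, with strictness coming from a positive-weight strictly larger term. The only difference is presentational: you explicitly convert the hypotheses into $\Mm(A)\leq\Mm(B)$ and $\Mm(A)\leq\Mm(C)$ and carefully isolate the $\mu$-null pieces via Lemma \ref{ldist0}, whereas the paper chains the weighted-sum inequalities directly and glosses over those degenerate cases.
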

\begin{proof} Let $B\cap C=\emptyset,\Mm(A)\leq \Mm(A\cup B),\Mm(A)\leq \Mm(A\cup C)$. Obviously we can assume that $A\cap B=A\cap C=\emptyset$. We know that $\displaystyle\Mm(A)\leq\frac{\mu(A)\Mm(A)+\mu(B)\Mm(B)}{\mu(A)+\mu(B)}$ and $\displaystyle\Mm(A)\leq\frac{\mu(A)\Mm(A)+\mu(C)\Mm(C)}{\mu(A)+\mu(C)}$. 

Then 
\[\Mm(A\cup B\cup C)=\frac{\mu(A)\Mm(A)+\mu(B)\Mm(B)+\mu(C)\Mm(C)}{\mu(A)+\mu(B)+\mu(C)}
\geq\]
\[\frac{(\mu(A)+\mu(B))\Mm(A)+\mu(C)\Mm(C)}{\mu(A)+\mu(B)+\mu(C)}
=\frac{\mu(A)\Mm(A)+\mu(C)\Mm(C)+\mu(B)\Mm(A)}{\mu(A)+\mu(B)+\mu(C)}\geq\]
\[\frac{(\mu(A)+\mu(C))\Mm(A)+\mu(B)\Mm(A)}{\mu(A)+\mu(B)+\mu(C)}=\Mm(A).\]

Clearly if $\Mm(A)<\Mm(A\cup B)$ also holds, then we get that $\Mm(A)<\Mm(A\cup B\cup C)$.

The opposite inequalities can be handled similarly.
\end{proof}

\begin{lem}\label{lmmc}Let $I$ be a bounded interval, $\mu$ be a p-Borel measure on $I$. Then $\forall H_1\in \mathrm{Dom}(\Mm)\ \forall\varepsilon>0\ \exists\delta>0$ such that ${\mu\big((H_1-H_2)\cup (H_2-H_1)\big)<\delta}$ and $H_2\in \mathrm{Dom}(\Mm)$ implies that $|\Mm(H_1)-\Mm(H_2)|<\varepsilon$.
\end{lem}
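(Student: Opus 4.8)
The plan is to exploit two features of the setup: that $I$ is bounded, so the integrand $x$ is uniformly bounded on it, and that membership in $Dom(\Mm)$ means $0<\mu(H)<+\infty$, so the denominator of $\Mm(H_1)$ is a fixed positive number. Fix $H_1\in Dom(\Mm)$ and $\epsilon>0$, and set $M=\sup\{|x|:x\in I\}<+\infty$, so that $\left\lvert\int_A x\,d\mu\right\rvert\le M\mu(A)$ for every measurable $A\subset I$. Write $S=(H_1-H_2)\cup(H_2-H_1)$ for the symmetric difference, so the hypothesis reads $\mu(S)<\delta$.

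First I would record the two elementary perturbation estimates. Since $H_2$ and $H_1$ differ only on $S$, additivity of $\mu$ gives $|\mu(H_1)-\mu(H_2)|\le\mu(S)$, and additivity of the integral together with the bound above gives $\left\lvert\int_{H_1}x\,d\mu-\int_{H_2}x\,d\mu\right\rvert\le M\mu(S)$. These are the only places where boundedness of $I$ and the finiteness of $\mu$ on bounded sets are used.

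Next, the denominator must be kept away from $0$. If I require $\delta\le\tfrac12\mu(H_1)$, then $\mu(H_2)\ge\mu(H_1)-\mu(S)>\tfrac12\mu(H_1)>0$. This is the step I expect to be the only genuine obstacle: without a lower bound on $\mu(H_2)$ the quotient defining $\Mm(H_2)$ could be arbitrarily sensitive to a small change in the numerator, and the boundedness of $I$ does nothing to help here. It is precisely the a priori restriction on $\delta$, rather than any regularity of $\mu$, that rescues the estimate.

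Finally I would combine everything as in the proof of Proposition \ref{pmmcc}. Putting the two quotients over the common denominator $\mu(H_1)\mu(H_2)$ and writing the numerator as $(\mu(H_2)-\mu(H_1))\int_{H_1}x\,d\mu-\mu(H_1)\bigl(\int_{H_2}x\,d\mu-\int_{H_1}x\,d\mu\bigr)$, the two displayed estimates yield
$$|\Mm(H_1)-\Mm(H_2)|\le\frac{\mu(S)\,M\mu(H_1)+\mu(H_1)\,M\mu(S)}{\mu(H_1)\mu(H_2)}=\frac{2M\mu(S)}{\mu(H_2)}\le\frac{4M\mu(S)}{\mu(H_1)}.$$
Choosing $\delta=\min\bigl\{\tfrac12\mu(H_1),\tfrac{\epsilon\mu(H_1)}{4M}\bigr\}$ (the assertion being trivial when $M=0$) then forces the right-hand side below $\epsilon$ for every admissible $H_2$, which completes the proof.
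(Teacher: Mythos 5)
Your proof is correct and follows essentially the same route as the paper's: a direct estimate in which boundedness of $I$ bounds the integrand and the a priori restriction $\delta\le\tfrac12\mu(H_1)$ keeps $\mu(H_2)$ bounded away from zero. The only differences are cosmetic — you split the numerator via a two-term perturbation identity (as in Proposition \ref{pmmcc}) where the paper splits the integrals over $H_1-H_2$, $H_1\cap H_2$, $H_2-H_1$ — and your constant $M=\sup\{|x|:x\in I\}$ is in fact slightly cleaner, since the paper's bounds written with $\sup H_1$ and $\sup I$ tacitly assume the integrand is nonnegative.
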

\begin{proof}Clearly
 $|\Mm(H_1)-\Mm(H_2)|=\left\lvert\frac{\int\limits_{H_1} x d\mu}{\mu(H_1)}-\frac{\int\limits_{H_2} x d\mu}{\mu(H_2)}\right\lvert=$

$\left\lvert\frac{\int\limits_{H_1-H_2} x d\mu+\int\limits_{H_1\cap H_2} x d\mu}{\mu(H_1)} - \frac{\int\limits_{H_1\cap H_2} x d\mu+\int\limits_{H_2-H_1} x d\mu}{\mu(H_2)}\right\lvert\leq$

$\left\lvert\frac{\int\limits_{H_1-H_2} x d\mu}{\mu(H_1)}\right\lvert + \left\lvert\int\limits_{H_1\cap H_2} x d\mu\right\lvert\left\lvert\frac{1}{\mu(H_1)}-\frac{1}{\mu(H_2)}\right\lvert + \left\lvert\frac{\int\limits_{H_2-H_1} x d\mu}{\mu(H_2)}\right\lvert=$

$\left\lvert\frac{\int\limits_{H_1-H_2} x d\mu}{\mu(H_1)}\right\lvert + \left\lvert\int\limits_{H_1\cap H_2} x d\mu\right\lvert\left\lvert\frac{\mu(H_2)-\mu(H_1)}{\mu(H_1)\mu(H_2)}\right\lvert+\left\lvert\frac{\int\limits_{H_2-H_1} x d\mu}{\mu(H_2)}\right\lvert$. 

Clearly $\int\limits_{H_1- H_2} x d\mu<\delta\sup H_1$, $\int\limits_{H_1\cap H_2} x d\mu<\mu(H_1)\sup H_1=K_1, \int\limits_{H_2-H_1} x d\mu<\delta\sup H_2\leq\delta\sup I$.

If $\delta<\frac{\mu(H_1)}{2}$, then
\[|\Mm(H_1)-\Mm(H_2)|<\frac{\delta\sup H_1}{\mu(H_1)} + K_1\frac{2\delta}{\mu(H_1)^2} + \frac{2\delta\sup I}{\mu(H_1)}<\varepsilon\]
showing that $\delta$ can be chosen.
\end{proof}

\begin{cor}Let $I$ be a bounded interval, $\mu$ be a p-Borel measure on $I$. If $\mathrm{Dom}(\Mm)$ is equipped with the pseudo-metric ${d_{\mu}(H_1,H_2)=\mu\big((H_1-H_2)\cup (H_2-H_1)\big)}$, then $\Mm$ is continuous according to $d_{\mu}$. \qed
\end{cor}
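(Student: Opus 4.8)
The plan is to observe that this Corollary is essentially a reformulation of Lemma \ref{lmmc}, so that only two small things remain to be checked: that $d_\mu$ is genuinely a pseudo-metric, and that the conclusion of Lemma \ref{lmmc} is literally the assertion that $\Mm$ is continuous at each point of the resulting pseudo-metric space.

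First I would verify the pseudo-metric axioms for $d_\mu$. Writing $H_1\triangle H_2=(H_1-H_2)\cup(H_2-H_1)$ for the symmetric difference, we have $H\triangle H=\emptyset$, so $d_\mu(H,H)=\mu(\emptyset)=0$; symmetry is immediate since $H_1\triangle H_2=H_2\triangle H_1$. The triangle inequality follows from the set inclusion $H_1\triangle H_3\subseteq(H_1\triangle H_2)\cup(H_2\triangle H_3)$ together with the monotonicity and finite subadditivity of $\mu$, yielding $d_\mu(H_1,H_3)\leq d_\mu(H_1,H_2)+d_\mu(H_2,H_3)$. Note that $d_\mu(H_1,H_2)=0$ does not force $H_1=H_2$, only that the two sets differ by a $\mu$-null set — which is exactly why $d_\mu$ is a pseudo-metric rather than a metric, and is consistent with Lemma \ref{ldist0}.

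Next I would translate the continuity requirement into its $\epsilon$--$\delta$ form. Continuity of $\Mm\colon(Dom(\Mm),d_\mu)\to(\mathbb{R},|\cdot|)$ at a point $H_1$ means precisely that for every $\epsilon>0$ there is $\delta>0$ such that $d_\mu(H_1,H_2)<\delta$ implies $|\Mm(H_1)-\Mm(H_2)|<\epsilon$. Since $d_\mu(H_1,H_2)=\mu((H_1-H_2)\cup(H_2-H_1))$, this is verbatim the conclusion of Lemma \ref{lmmc}. As $H_1\in Dom(\Mm)$ was arbitrary, $\Mm$ is continuous on the whole space.

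There is no real obstacle here, since the analytic estimate was already carried out in Lemma \ref{lmmc}; the only genuine step is the triangle inequality for $d_\mu$, which is the standard symmetric-difference argument. One point worth remarking is that the $\delta$ supplied by Lemma \ref{lmmc} depends on $H_1$ (through $\mu(H_1)$ and $\sup H_1$), so the argument gives continuity at each point but not uniform continuity; the boundedness of $I$ enters only through the bound $\sup H_2\leq\sup I$ in the lemma and is otherwise inessential.
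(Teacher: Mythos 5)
Your proposal is correct and matches the paper's intent exactly: the paper marks this corollary with \qed precisely because it is an immediate restatement of Lemma \ref{lmmc}, whose $\epsilon$--$\delta$ conclusion is verbatim the definition of continuity of $\Mm$ with respect to $d_{\mu}$. Your additional verification of the pseudo-metric axioms (via the symmetric-difference inclusion and subadditivity of $\mu$) is a sound, if routine, supplement that the paper leaves implicit.
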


\begin{ex}This is obviously not true if $I$ is not bounded. See e.g. $\mu=\lambda, H_1=[0,1], \varepsilon=0.1, H_2=[0,1]\cup[\frac{1}{\delta},\frac{1}{\delta}+\delta]$. Then $\mathrm{Avg}(H_1)=0.5,\mathrm{Avg}(H_2)=\frac{0.5\cdot 1+(\frac{1}{\delta}+\frac{\delta}{2})\delta}{1+\delta}>0.75$.
\end{ex}

\begin{prp}If $\mu$ is absolutely continuous with respect to $\lambda$, then $\Mm$ is bi-slice-continuous.
\end{prp}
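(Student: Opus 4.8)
The plan is to verify directly the two requirements in the definition of bi-slice-continuity: first that $H^{+\varliminf H}$ and $H^{-\varlimsup H}$ lie in $Dom(\Mm)$, and then that the two-variable function $f(x,y)=\Mm(H^{-x}\cup H^{+y})$ is continuous on $Dom(f)$. Throughout I write $G(x,y)=H^{-x}\cup H^{+y}$ and recall that $H$ is bounded, so every such $G(x,y)$ is contained in a fixed bounded interval $I$.

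For the first requirement, set $a=\varliminf H=\min H'$. Since $a$ is the least accumulation point of $H$, the set $H\cap(-\infty,a)$ has no accumulation point other than possibly $a$, hence it is at most countable and therefore $\lambda$-null. By absolute continuity $\mu\big(H\cap(-\infty,a)\big)=0$, so $\mu(H^{+\varliminf H})=\mu(H)\in(0,+\infty)$ and thus $H^{+\varliminf H}\in Dom(\Mm)$. The argument for $H^{-\varlimsup H}$ is symmetric.

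For the continuity of $f$ I would fix $(x_0,y_0)\in Dom(f)$ and reduce the matter to Lemma \ref{lmmc}. The key observation is that the slice measures $x\mapsto\mu(H^{-x})$ and $y\mapsto\mu(H^{+y})$ are continuous: since $\mu\ll\lambda$ the measure $\mu$ is atomless ($\mu(\{t\})=0$ for every $t$), so continuity from above and below of the finite measure $\mu$ yields $\mu\big(H\cap(-\infty,x]\big)\to\mu\big(H\cap(-\infty,x_0]\big)$ as $x\to x_0$, and likewise for the $+y$ slices. More precisely, the symmetric difference $G(x,y)\triangle G(x_0,y_0)$ is contained in $H\cap\big([\,x\wedge x_0,\,x\vee x_0\,]\cup[\,y\wedge y_0,\,y\vee y_0\,]\big)$, whose $\mu$-measure tends to $0$ as $(x,y)\to(x_0,y_0)$, again by atomlessness. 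Hence $d_\mu\big(G(x,y),G(x_0,y_0)\big)\to 0$.

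Now Lemma \ref{lmmc} applies on the bounded interval $I$ with $H_1=G(x_0,y_0)\in Dom(\Mm)$: for every $\epsilon>0$ there is $\delta>0$ so that $d_\mu\big(G(x,y),G(x_0,y_0)\big)<\delta$ together with $G(x,y)\in Dom(\Mm)$ forces $|f(x,y)-f(x_0,y_0)|<\epsilon$. Combining this with the previous paragraph, for $(x,y)\in Dom(f)$ close enough to $(x_0,y_0)$ we obtain $|f(x,y)-f(x_0,y_0)|<\epsilon$, i.e. $f$ is continuous at $(x_0,y_0)$. The step I expect to be the crux is the $d_\mu$-continuity of $(x,y)\mapsto G(x,y)$, which is exactly where absolute continuity is used: for a measure with atoms the slice measures would jump as $x$ or $y$ crosses an atom, and $f$ could fail to be continuous.
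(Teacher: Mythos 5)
Your proposal is correct and follows the paper's own route: the paper's (very terse) proof likewise reduces everything to Lemma \ref{lmmc} after noting that absolute continuity makes the $\mu$-measure of the symmetric difference $G(x,y)\triangle G(x_0,y_0)$ small, the only cosmetic difference being that the paper invokes the $\epsilon$--$\delta$ form of absolute continuity (with $\lambda$ of the symmetric difference bounded by $|x-x_0|+|y-y_0|$) where you use atomlessness and continuity of measure. You also verify explicitly the domain requirement $H^{+\varliminf H},H^{-\varlimsup H}\in Dom(\Mm)$, which the paper leaves implicit; that is a welcome addition, not a deviation.
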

\begin{proof} We know that $\forall\varepsilon>0\ \exists\delta>0$ such that $\lambda(H)<\delta$ implies that $\mu(H)<\varepsilon$. Then apply Lemma \ref{lmmc}.
\end{proof}

Our next aim is to investigate inequalities between means.

\begin{lem}\label{lhut}Let $H,H_i\in \mathrm{Dom}(\Mm)\ (i\in\mathbb{N}), H_i\cap H_j=\emptyset\ {(i\ne j)}, {H=\cup_{i=1}^{\infty}H_i}$. Then
$\lim\limits_{n\to\infty}\Mm(\cup_{i=1}^{n}H_i)=\Mm(H)$.
\end{lem}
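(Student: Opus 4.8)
The plan is to recognize the finite-union means as the partial averages of the weighted average appearing in Proposition \ref{pmmwa}. Write $G_n=\bigcup_{i=1}^n H_i$ and set $S_n=\sum_{i=1}^n\mu(H_i)$ and $T_n=\sum_{i=1}^n\mu(H_i)\Mm(H_i)$. By the finite form of Proposition \ref{pmmwa} we have $\Mm(G_n)=T_n/S_n$, while its infinite form gives $\Mm(H)=T/S$ where $S=\lim_n S_n$ and $T=\lim_n T_n$. Thus it suffices to show $S_n\to S$ and $T_n\to T$ with $S\ne 0$, since then the algebra of limits yields $T_n/S_n\to T/S$, which is exactly the assertion $\Mm(G_n)\to\Mm(H)$.

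The convergence of the denominator series is immediate: $S_n=\mu(G_n)\to\mu(H)=S$ by countable additivity (equivalently, continuity of $\mu$ from below along the increasing sequence $G_n\uparrow H$), and $S=\mu(H)>0$ because $H\in Dom(\Mm)$. For the numerator I would use that each term is itself an integral, $\mu(H_i)\Mm(H_i)=\int_{H_i}x\,d\mu$, so that $T_n=\int_{G_n}x\,d\mu$ by finite additivity, and the full series $\sum_i\int_{H_i}x\,d\mu$ sums to $\int_H x\,d\mu=T$ by countable additivity of the integral over the disjoint pieces $H_i$.

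The only point requiring care — and the one I would flag as the main, though modest, obstacle — is the convergence of the numerator series when $x$ is allowed to change sign on $H$, so that the rearrangement into the pieces $H_i$ is legitimate. Here I would invoke the boundedness of $H$: with $M=\sup_{t\in H}|t|$ one has $\sum_i\left\lvert\int_{H_i}x\,d\mu\right\rvert\le\sum_i\int_{H_i}|x|\,d\mu=\int_H|x|\,d\mu\le M\mu(H)<\infty$, so the partial sums genuinely converge (equivalently, dominated convergence applies to $x\mathbf{1}_{G_n}\to x\mathbf{1}_H$ with the integrable dominating function $M\mathbf{1}_H$). Once both series are known to converge and $S=\mu(H)>0$ keeps the denominators bounded away from $0$, the quotient $T_n/S_n\to T/S=\Mm(H)$ follows, completing the proof.
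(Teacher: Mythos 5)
Your proof is correct, but it takes a genuinely different route from the paper's. The paper disposes of this lemma in one line by appealing to Lemma \ref{lmmc}: writing $G_n=\cup_{i=1}^n H_i$, countable additivity and $\mu(H)<+\infty$ give $\mu((H-G_n)\cup(G_n-H))=\mu(H-G_n)\to 0$, so continuity of $\Mm$ with respect to the pseudo-metric $d_\mu$ yields $\Mm(G_n)\to\Mm(H)$. You instead argue directly from the representation in Proposition \ref{pmmwa}: $\Mm(G_n)=T_n/S_n$ with $S_n=\mu(G_n)\to\mu(H)>0$ by continuity of $\mu$ from below, and $T_n=\int_{G_n}x\,d\mu\to\int_H x\,d\mu$ by dominated convergence (dominating function $M\mathbf{1}_H$ with $M=\sup_{t\in H}|t|<+\infty$, integrable since $H$ is bounded and $\mu(H)<+\infty$), followed by the algebra of limits. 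Your version is self-contained and slightly more elementary: it bypasses Lemma \ref{lmmc}, which is stated only for a bounded ambient interval $I$, so the paper's one-line proof tacitly restricts attention to a bounded interval containing $H$ (legitimate, since all sets in $Dom(\Mm)$ are bounded, but left unsaid); your explicit check of the absolute convergence of $\sum_i\int_{H_i}x\,d\mu$ also supplies a detail that the paper leaves implicit in the unproved infinite form of Proposition \ref{pmmwa}. What the paper's route buys in exchange is brevity and reuse: it leans on the stronger, already-established fact that $\Mm$ is uniformly continuous near a fixed set in $d_\mu$, a tool the paper also exploits elsewhere (e.g.\ for bi-slice-continuity), whereas your argument proves exactly what is needed and nothing more.
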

\begin{proof} It is enough to refer to Lemma \ref{lmmc}.
\end{proof}

\begin{prp}\label{pineq}Let $\mu,\nu$ be p-Borel measures on an interval $I$. Assume that  if $I_i\subset I\ (1\leq i\leq n, n\in\mathbb{N})$ are disjoint bounded open intervals, then $\Mm(\cup_{i=1}^n I_i)\leq{\cal{M}}^{\nu}(\cup_{i=1}^n I_i)$. 
Then $\Mm(H)\leq{\cal{M}}^{\nu}(H)\ \forall H\in \mathrm{Dom}(\Mm)\cap \mathrm{Dom}({\cal{M}}^{\nu})$.
\end{prp}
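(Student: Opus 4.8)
The plan is to bootstrap the assumed inequality from finite disjoint unions of bounded open intervals up to arbitrary common-domain sets in two stages: first I would extend it to \emph{countable} disjoint unions of bounded open intervals, and then approximate an arbitrary $H$ from above by such an open set, taking care to carry out the approximation simultaneously for both $\mu$ and $\nu$.

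For the first stage, let $G=\bigcup_{i=1}^{\infty}I_i$ be a bounded set that is a countable union of disjoint bounded open intervals $I_i\subset I$. Since each $I_i$ is non-degenerate we have $\mu(I_i)>0$, so every partial union $\bigcup_{i=1}^{n}I_i$ lies in $Dom(\Mm)$, and boundedness gives $\mu(G)<\infty$, whence $G\in Dom(\Mm)$; the same holds for $\nu$. Lemma \ref{lhut} then yields $\Mm(\bigcup_{i=1}^{n}I_i)\to\Mm(G)$ and $\Mn(\bigcup_{i=1}^{n}I_i)\to\Mn(G)$. Applying the hypothesis $\Mm(\bigcup_{i=1}^{n}I_i)\le\Mn(\bigcup_{i=1}^{n}I_i)$ and letting $n\to\infty$ gives $\Mm(G)\le\Mn(G)$ for every such $G$.

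For the second stage, fix $H\in Dom(\Mm)\cap Dom(\Mn)$, so $H$ is bounded with $0<\mu(H),\nu(H)<\infty$. Using outer regularity of the locally finite Borel measures $\mu$ and $\nu$ on the bounded set $H$ — exactly the fact already invoked in the proof of Proposition \ref{pMmdetbyvoi} — for each $k$ I would choose open sets $U_k\supset H$, $V_k\supset H$ with $\mu(U_k\setminus H)<1/k$ and $\nu(V_k\setminus H)<1/k$, fix a bounded open interval $W$ with $H\subset W$, and set $G_k=W\cap\bigcap_{j=1}^{k}(U_j\cap V_j)$. Each $G_k$ is a bounded open set, hence a countable disjoint union of bounded open intervals; the sequence $(G_k)$ is decreasing, $H\subset G_k$, and $\mu(G_k\setminus H)<1/k$, $\nu(G_k\setminus H)<1/k$. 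Setting $H^{*}=\bigcap_{k=1}^{\infty}G_k$ gives $H\subset H^{*}$ with $\mu(H^{*}\setminus H)=\nu(H^{*}\setminus H)=0$ (continuity from above), so $H^{*}\in Dom(\Mm)\cap Dom(\Mn)$ and, by Lemma \ref{ldist0}, $\Mm(H^{*})=\Mm(H)$ and $\Mn(H^{*})=\Mn(H)$. Since $(G_k)$ decreases to $H^{*}$, Cantor-continuity (Proposition \ref{pmmcc}) gives $\Mm(G_k)\to\Mm(H)$ and $\Mn(G_k)\to\Mn(H)$; as the first stage supplies $\Mm(G_k)\le\Mn(G_k)$ for every $k$, letting $k\to\infty$ yields $\Mm(H)\le\Mn(H)$.

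The step I expect to be the main obstacle is producing a \emph{single} approximating sequence that is simultaneously good for both measures: the inequality is assumed only for unions of open intervals, so I must push $H$ up to an open set, yet passing to the limit via Proposition \ref{pmmcc} and Lemma \ref{ldist0} requires that this one open set be both $\mu$-close and $\nu$-close to $H$. Intersecting the two outer-regular approximations resolves this cleanly, but one must still verify that every $G_k$ and $H^{*}$ remains bounded and inside both domains so that the cited continuity results genuinely apply.
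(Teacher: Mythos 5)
Your proof is correct and follows essentially the same route as the paper: Lemma \ref{lhut} to pass from finite to countable disjoint unions of intervals (hence bounded open sets), then an outer approximation of $H$ combined with Cantor-continuity (Proposition \ref{pmmcc}) and Lemma \ref{ldist0}. The paper's own proof is a two-line sketch that leaves the approximation implicit; your explicit construction of a single decreasing open sequence that is simultaneously $\mu$-good and $\nu$-good is precisely the detail needed to make that sketch rigorous.
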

\begin{proof} By Lemma \ref{lhut} it is true for countably many intervals too i.e. it is valid for any bounded open set. Finally if $H\in \mathrm{Dom}(\Mm)\cap \mathrm{Dom}({\cal{M}}^{\nu})$, then Cantor-continuity and Lemma \ref{ldist0} yield that $\Mm(H)\leq{\cal{M}}^{\nu}(H)$.
\end{proof}

Now we present a sufficient condition which together with \ref{suffcondforcond2}, we will apply later in section \ref{s3}.

\begin{prp}\label{suffcondforineq}Let $\mu,\nu$ be p-Borel measures on an interval $I$ and let us assume that the following two conditions hold.
\vspace{-0.3pc}\begin{enumerate}\setlength\itemsep{-0.3em}
\item If $J\subset I$ is a bounded open interval, then $\Mm(J)\leq{\cal{M}}^{\nu}(J)$.
\item If $J,K$ are bounded open intervals such that $\sup J\leq\inf K$, then ${\frac{\nu(J)}{\mu(J)}\leq\frac{\nu(K)}{\mu(K)}}$.
\end{enumerate}
Then $\Mm(H)\leq\Mn(H)\ \forall H\in \mathrm{Dom}(\Mm)\cap \mathrm{Dom}({\cal{M}}^{\nu})$.
\end{prp}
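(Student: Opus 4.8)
The plan is to reduce the statement to the case of finitely many disjoint bounded open intervals, which is exactly the hypothesis of Proposition \ref{pineq}. So it suffices to prove: whenever $I_1,\dots,I_n\subset I$ are disjoint bounded open intervals, then $\Mm(\cup_{i=1}^n I_i)\leq\Mn(\cup_{i=1}^n I_i)$. After relabelling I may assume the intervals are ordered from left to right, so that $\sup I_i\leq\inf I_{i+1}$ for every $i$. Each $I_i$ is a non-degenerate bounded interval, hence $0<\mu(I_i),\nu(I_i)<+\infty$ and the quantities $\mu_i=\mu(I_i),\ \nu_i=\nu(I_i),\ m_i=\Mm(I_i),\ n_i=\Mn(I_i)$ are all well defined. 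By Proposition \ref{pmmwa},
$$\Mm\Big(\bigcup_i I_i\Big)=\frac{\sum_i \mu_i m_i}{\sum_i \mu_i},\qquad \Mn\Big(\bigcup_i I_i\Big)=\frac{\sum_i \nu_i n_i}{\sum_i \nu_i}.$$

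I would then split the target inequality into two steps. The first step uses hypothesis (1): since $m_i\leq n_i$ for every $i$, replacing $m_i$ by $n_i$ in the $\mu$-weighted average gives $\Mm(\cup_i I_i)\leq \frac{\sum_i \mu_i n_i}{\sum_i \mu_i}$. The second step compares two weighted averages of the \emph{same} values $n_i$, with weights $\mu_i$ versus $\nu_i$. Writing $r_i=\nu_i/\mu_i$, I record two monotonicity facts along the chosen ordering: the means increase, because internality of $\Mn$ forces $n_i\leq\sup I_i\leq\inf I_{i+1}\leq n_{i+1}$; and the ratios increase, $r_i\leq r_{i+1}$, which is precisely hypothesis (2) applied to $J=I_i,\ K=I_{i+1}$.

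The core of the argument is the (weighted) Chebyshev sum inequality for the two comonotone sequences $(n_i)$ and $(r_i)$ with positive weights $\mu_i$, namely
$$\Big(\sum_i \mu_i\Big)\Big(\sum_i \mu_i r_i n_i\Big)\ \geq\ \Big(\sum_i \mu_i r_i\Big)\Big(\sum_i \mu_i n_i\Big),$$
which I would establish from the symmetrization identity
$$\Big(\sum_i \mu_i\Big)\Big(\sum_i \mu_i r_i n_i\Big)-\Big(\sum_i \mu_i r_i\Big)\Big(\sum_i \mu_i n_i\Big)=\tfrac{1}{2}\sum_{i,j}\mu_i\mu_j\,(r_i-r_j)(n_i-n_j)\ \geq\ 0,$$
the nonnegativity being immediate from comonotonicity. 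Since $\mu_i r_i=\nu_i$, this rearranges to $\frac{\sum_i \mu_i n_i}{\sum_i \mu_i}\leq\frac{\sum_i \nu_i n_i}{\sum_i \nu_i}=\Mn(\cup_i I_i)$. Chaining the two steps yields $\Mm(\cup_i I_i)\leq\Mn(\cup_i I_i)$, and Proposition \ref{pineq} upgrades this to all $H\in Dom(\Mm)\cap Dom(\Mn)$.

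I expect the only genuine obstacle to be the conceptual one of reading hypothesis (2) correctly: it says that the ``density ratio'' $\nu/\mu$ is monotone along the line, so that the problem decouples into a pointwise comparison of the interval-means (supplied by hypothesis (1)) and a comonotone-reweighting comparison (supplied by Chebyshev). Once this structure is seen, the individual estimates are routine, and the left-to-right ordering is the device that makes both $(n_i)$ and $(r_i)$ increase simultaneously so that their covariance is nonnegative.
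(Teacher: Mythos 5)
Your proof is correct, and it follows the paper's outer skeleton while replacing its key step with a different device. Like the paper, you reduce via Proposition \ref{pineq} to finitely many disjoint bounded open intervals, order them left to right, and expand both means with Proposition \ref{pmmwa}. The paper then argues by induction on $n$, peeling off the rightmost interval $I_n$: condition (1) together with the induction hypothesis replaces $\Mm$ by $\Mn$ inside the $\mu$-weighted average, and comparing the resulting $\mu$-weighted and $\nu$-weighted \emph{two-term} averages reduces to the nonnegativity of the product
$\bigl(\Mn(I_n)-\Mn(\cup_{i=1}^{n-1}I_i)\bigr)\bigl(\nu(I_n)\mu(\cup_{i=1}^{n-1}I_i)-\mu(I_n)\nu(\cup_{i=1}^{n-1}I_i)\bigr)$;
for the second factor the paper needs a preliminary observation that condition (2) survives aggregation of the left-hand intervals (a mediant-type remark). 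You instead perform the replacement $m_i\mapsto n_i$ in one shot for all $i$ and then invoke the weighted Chebyshev sum inequality for the comonotone sequences $(n_i)$ and $(r_i)=(\nu_i/\mu_i)$, justified by the symmetrization identity. The two arguments are close relatives --- the paper's product inequality is exactly the two-term case of your Chebyshev step applied to aggregated blocks, and unrolling its induction essentially re-proves Chebyshev --- but your version decouples the roles of the hypotheses cleanly (condition (1) supplies the pointwise comparison of interval means, condition (2) supplies comonotonicity of the density ratios) and avoids both the induction and the aggregation remark, at the cost of proving the symmetrization identity. Both proofs are complete; yours is arguably the more transparent organization of the same underlying algebra.
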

\begin{proof} First let us observe that condition 2 simply implies that if $I_i\subset I\ (1\leq i\leq n, n\in\mathbb{N})$ are disjoint bounded open intervals and $\sup I_i\leq\inf K\ (\forall i)$, then $$\frac{\nu(\cup_{i=1}^n I_i)}{\mu(\cup_{i=1}^n I_i)}\leq\frac{\nu(K)}{\mu(K)}.$$

On the same assumptions by \ref{pineq} we have to show $\Mm(\cup_{i=1}^n I_i)\leq{\cal{M}}^{\nu}(\cup_{i=1}^n I_i)$. It holds for $n=1$ by condition 1. We go on by induction. Suppose it is true for $n-1$. Let $\sup I_i\leq\inf I_n\ (1\leq i\leq n-1)$.

Using Proposition \ref{pmmwa} we have
\[\Mm(\cup_{i=1}^n I_i)=\frac{\mu(\cup_{i=1}^{n-1} I_i)\Mm(\cup_{i=1}^{n-1} I_i) + \mu(I_n)\Mm(I_n)}{\mu(\cup_{i=1}^{n-1} I_i)+\mu(I_n)}\leq\]
\[\frac{\mu(\cup_{i=1}^{n-1} I_i)\Mn(\cup_{i=1}^{n-1} I_i) + \mu(I_n)\Mn(I_n)}{\mu(\cup_{i=1}^{n-1} I_i)+\mu(I_n)}.\]
It is enough to prove that 
$$\frac{\mu(\cup_{i=1}^{n-1} I_i)\Mn(\cup_{i=1}^{n-1} I_i) + \mu(I_n)\Mn(I_n)}{\mu(\cup_{i=1}^{n-1} I_i)+\mu(I_n)}\leq
\frac{\nu(\cup_{i=1}^{n-1} I_i)\Mn(\cup_{i=1}^{n-1} I_i) + \nu(I_n)\Mn(I_n)}{\nu(\cup_{i=1}^{n-1} I_i)+\nu(I_n)}$$
and that is equivalent to
$$\mu(\cup_{i=1}^{n-1} I_i)\Mn(\cup_{i=1}^{n-1} I_i)\nu(I_n) +  \mu(I_n)\Mn(I_n)\nu(\cup_{i=1}^{n-1} I_i)\leq$$
$$\nu(\cup_{i=1}^{n-1} I_i)\Mn(\cup_{i=1}^{n-1} I_i)\mu(I_n) +  \nu(I_n)\Mn(I_n)\mu(\cup_{i=1}^{n-1} I_i)$$
and
$$0\leq \Big(\Mn(I_n) - \Mn(\cup_{i=1}^{n-1} I_i)\Big) \Big(\nu(I_n)\mu(\cup_{i=1}^{n-1} I_i) - \mu(I_n)\nu(\cup_{i=1}^{n-1} I_i)  \Big)$$

But the first term is obviously positive and by the consequence of condition 2 
$\nu(I_n)\mu(\cup_{i=1}^{n-1} I_i) \geq \mu(I_n)\nu(\cup_{i=1}^{n-1} I_i)$ is valid as well.
\end{proof}

\begin{prp}\label{suffcondforcond2}Let $f,g$ be increasing differentiable functions. If $\frac{g'(x)}{f'(x)}$ is increasing, then condition 2 (in \ref{suffcondforineq}) holds for $\mu_f,\mu_g$.
\end{prp}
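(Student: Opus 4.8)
The plan is to compute both ratios explicitly from the defining property of Lebesgue--Stieltjes measures and then compare them using the monotonicity of $g'/f'$. For a bounded open interval $(a,b)$ the Carath\'eodory construction gives $\mu_f((a,b))=f(b)-f(a)=\int_a^b f'\,d\lambda$, and likewise $\mu_g((a,b))=g(b)-g(a)=\int_a^b g'\,d\lambda$. Hence, writing $J=(a,b)$ and $K=(c,d)$ with $b\leq c$ (this is what $\sup J\leq\inf K$ means for open intervals), condition 2 for $\mu_f,\mu_g$ reads
\[
\frac{\mu_g(J)}{\mu_f(J)}=\frac{g(b)-g(a)}{f(b)-f(a)}\leq\frac{g(d)-g(c)}{f(d)-f(c)}=\frac{\mu_g(K)}{\mu_f(K)}.
\]

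Set $h=g'/f'$. I would finish by a direct bounding argument, which is the most robust route. On $J$ one has $g'(x)=h(x)f'(x)\leq h(b)f'(x)$ for every $x\in[a,b]$ by monotonicity of $h$, and integrating gives $\mu_g(J)\leq h(b)\,\mu_f(J)$, i.e. $\mu_g(J)/\mu_f(J)\leq h(b)$. Symmetrically, on $K$ one has $g'(y)=h(y)f'(y)\geq h(c)f'(y)$, whence $\mu_g(K)/\mu_f(K)\geq h(c)$. Since $b\leq c$ and $h$ is increasing, $h(b)\leq h(c)$, and chaining the three inequalities yields exactly the desired comparison. (A slicker but slightly more delicate alternative is Cauchy's generalized mean value theorem: it produces $\xi\in(a,b)$ and $\eta\in(c,d)$ with $\frac{\mu_g(J)}{\mu_f(J)}=h(\xi)$ and $\frac{\mu_g(K)}{\mu_f(K)}=h(\eta)$, and $\xi<b\leq c<\eta$ forces $h(\xi)\leq h(\eta)$.)

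The only real subtlety I anticipate is the well-definedness of $h=g'/f'$, which requires $f'>0$ so that the ratio makes sense and the denominators $\mu_f(J),\mu_f(K)$ are strictly positive. The hypothesis that $f$ is increasing yields only $f'\geq 0$, but since the statement presupposes that $g'/f'$ is defined and increasing, positivity of $f'$ is implicitly in force; it is also consistent with the standing assumption (2) that every measure assigns positive mass to nondegenerate intervals. The bounding argument needs nothing beyond $f'>0$ and monotonicity of $h$, whereas Cauchy's theorem additionally demands $f'$ nonvanishing on the \emph{closed} interval, so I would present the bounding version to avoid any boundary issues. With positivity of $f'$ secured, both the integral estimates and the final chaining are routine, so I do not expect any further obstruction.
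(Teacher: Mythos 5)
Your proof is correct under the proposition's hypotheses, but your main argument takes a genuinely different route from the paper's --- which is in fact exactly the parenthetical alternative you mention: the paper simply applies Cauchy's mean value theorem on each interval to get $\alpha\in(a,b)$, $\beta\in(c,d)$ with $\frac{g(b)-g(a)}{f(b)-f(a)}=\frac{g'(\alpha)}{f'(\alpha)}$ and $\frac{g(d)-g(c)}{f(d)-f(c)}=\frac{g'(\beta)}{f'(\beta)}$, and concludes from $\alpha<b\leq c<\beta$ and monotonicity of $g'/f'$. Your endpoint sandwich $\frac{\mu_g(J)}{\mu_f(J)}\leq h(b)\leq h(c)\leq\frac{\mu_g(K)}{\mu_f(K)}$ is a valid replacement, but one step deserves more care than you give it: the identity $\mu_f((a,b))=\int_a^b f'\,d\lambda$ is \emph{not} given by the Carath\'eodory construction (that only yields $\mu_f((a,b))=f(b)-f(a)$); it amounts to absolute continuity of $f$, and for an increasing function that is differentiable everywhere this is a genuine theorem (an everywhere-differentiable function with Lebesgue-integrable derivative satisfies the fundamental theorem of calculus), whereas for merely a.e.-differentiable increasing functions, such as the Cantor function, one only gets $\int_a^b f'\,d\lambda\leq f(b)-f(a)$, possibly strictly. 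You can either cite that theorem or patch the step elementarily: apply the ordinary mean value theorem to $\varphi=h(b)f-g$ on $[a,b]$, whose derivative $f'\cdot(h(b)-h)$ is nonnegative, to get $g(b)-g(a)\leq h(b)\bigl(f(b)-f(a)\bigr)$, and symmetrically to $g-h(c)f$ on $[c,d]$. With that repair your route is actually more robust than the paper's: it survives weakening the hypotheses to absolutely continuous $f,g$ with $g'/f'$ increasing only a.e., where Cauchy's theorem is unavailable; the paper's proof, in exchange, is two lines and involves no measure theory at all. Your observation that $f'>0$ is implicitly forced (so that $h$ is defined and the denominators are nonzero) is well taken --- the paper's proof needs, and tacitly uses, the same fact.
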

\begin{proof} Let $J=(a,b), K=(c,d), b\leq c$. We have to show that
$$\frac{g(b)-g(a)}{f(b)-f(a)}\leq\frac{g(d)-g(c)}{f(d)-f(c)}.$$
By Cauchy's mean value theorem there are $\alpha\in(a,b)$ and $\beta\in(c,d)$ such that 
$$\frac{g(b)-g(a)}{f(b)-f(a)}=\frac{g'(\alpha)}{f'(\alpha)},\frac{g(d)-g(c)}{f(d)-f(c)}=\frac{g'(\beta)}{f'(\beta)}$$
and $\frac{g'(\alpha)}{f'(\alpha)}\leq\frac{g'(\beta)}{f'(\beta)}$ by assumption.
\end{proof}

Let us investigate uniqueness.

\begin{thm}Let $\Mm=\Mn$. Then there is $c\in\mathbb{R}, c>0$ such that $\nu=c\mu$.
\end{thm}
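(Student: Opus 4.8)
The plan is to show that the hypothesis $\Mm=\Mn$ forces the ratio $\nu(H)/\mu(H)$ to be a single positive constant on a generating family of intervals, and then to invoke the uniqueness of measures that agree on a $\pi$-system. First I would extract a fundamental identity. Let $H_1,H_2\in Dom(\Mm)=Dom(\Mn)$ be disjoint with $\Mm(H_1)\ne\Mm(H_2)$, and write $m_i=\Mm(H_i)=\Mn(H_i)$ and $m=\Mm(H_1\cup H_2)=\Mn(H_1\cup H_2)$. Applying Proposition \ref{pmmwa} to both $\mu$ and $\nu$ gives $\mu(H_1)(m-m_1)=\mu(H_2)(m_2-m)$ together with the same relation for $\nu$; since the weights are positive and $m_1\ne m_2$, the value $m$ lies strictly between $m_1$ and $m_2$, so both factors are nonzero and dividing yields
\[\frac{\mu(H_1)}{\mu(H_2)}=\frac{m_2-m}{m-m_1}=\frac{\nu(H_1)}{\nu(H_2)},\qquad\text{i.e.}\qquad\frac{\nu(H_1)}{\mu(H_1)}=\frac{\nu(H_2)}{\mu(H_2)}.\]
Thus the ratio $\rho(H):=\nu(H)/\mu(H)$ is constant across disjoint pairs of sets having distinct means, and $0<\rho(H)<+\infty$ by conditions (1)--(2) for both measures.

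Next I would propagate this to intervals. Each of $\mu,\nu$ is finite on bounded sets, hence has at most countably many atoms, so in any open interval $(a,b)\subset I$ I can pick a splitting point $m$ that is an atom of neither measure. Then $\mu((a,m))+\mu((m,b))=\mu((a,b))$ and likewise for $\nu$, while $\Mm((a,m))<m<\Mm((m,b))$, the strict inequalities holding because $x<m$ on the left piece and $x>m$ on the right piece, each of positive $\mu$-measure. Hence the two halves are disjoint with distinct means, so the fundamental identity gives $\rho((a,m))=\rho((m,b))$, and additivity then forces $\rho((a,b))=\rho((a,m))=\rho((m,b))$.

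Then I would show this common value is globally constant. Fixing a left endpoint $a$ and setting $\phi(t)=\rho((a,t))$, the previous step gives $\phi(t)=\phi(m)$ for every atom-free $m\in(a,t)$; as such points are dense, $\phi$ is constant, say $\phi\equiv c_a$, so $\nu((a,t))=c_a\,\mu((a,t))$ for all $t$. Splitting $(a,t)=(a,a')\cup(a',t)$ at an atom-free $a'$ and using additivity of both measures, a short computation cancels to give $c_a=c_{a'}$, so $c:=c_a$ does not depend on $a$. Therefore $\nu(J)=c\,\mu(J)$ for every bounded open interval $J\subset I$. Since the bounded open intervals form a $\pi$-system generating the Borel $\sigma$-algebra and both $\nu$ and $c\mu$ are $\sigma$-finite, the standard uniqueness theorem for measures yields $\nu=c\mu$ with $c>0$.

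The main obstacle is not the opening algebra but the propagation. One must guarantee, first, that the two halves of an interval genuinely have distinct means so that the identity applies — this is exactly where the strict internality of $\Mm$ on a single interval enters — and, second, that the merely local constancy of $\rho$ upgrades to one global constant, for which the atom-free splitting points and the density argument for $\phi$ are the decisive devices. Handling atoms correctly, so that additivity of both measures across each splitting point is exact, is the technical heart of the proof.
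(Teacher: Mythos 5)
Your proof is correct, and its first half is essentially the paper's own opening: the paper applies Proposition \ref{pmmwa} to a disjoint pair $A,B$, cross-multiplies the two weighted-average formulas, and arrives at $\big({\Mm}(A)-{\Mm}(B)\big)\big(\mu(A)\nu(B)-\nu(A)\mu(B)\big)=0$, hence $\nu(A)/\mu(A)=\nu(B)/\mu(B)$ whenever $A\cap B=\emptyset$ and ${\Mm}(A)\ne{\Mm}(B)$ --- exactly your fundamental identity. Where you genuinely diverge is the globalization. The paper uses a single distant witness: it sets $c=\nu((0,1))/\mu((0,1))$ and, given an arbitrary $H\in Dom({\Mm})$, chooses an interval $J$ disjoint from $(0,1)$ with $\sup H<\inf J$; internality then forces ${\Mm}(J)\ne{\Mm}(H)$ and ${\Mm}(J)\ne{\Mm}((0,1))$, so two applications of the identity chain the ratios and give $\nu(H)=c\mu(H)$ immediately --- no splitting points, no atom bookkeeping, no $\pi$-system, and the conclusion lands directly on every $H$ in the domain rather than only on intervals. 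Your route (atom-free splitting of an interval into two halves with provably distinct means, density of non-atoms to make $\rho$ constant, then Dynkin uniqueness on the $\pi$-system of bounded open intervals) is longer, but it buys two things the paper's argument quietly presupposes or omits: it works even when the underlying interval of definition is bounded, whereas the paper's choice of $J$ beyond $\sup H$ needs room to the right of every bounded $H$ and tacitly assumes $(0,1)$ lies in the interval carrying the measures; and it delivers the literal measure identity $\nu=c\mu$ on the whole Borel $\sigma$-algebra, including $\mu$-null and unbounded sets, which the paper obtains only on $Dom({\Mm})$ (equality on null sets there follows separately from $Dom({\Mm})=Dom({\Mn})$). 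Your care with atoms is not pedantry: the paper's standing hypotheses on $\mu$ do not exclude atoms, so exact additivity across a splitting point does require choosing it to be an atom of neither measure, precisely as you do.
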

\begin{proof} Let $A,B\in \mathrm{Dom}(\Mm), A\cap B=\emptyset$. Then $\Mm(A\cup B)=\Mn(A\cup B)$. By \ref{pmmwa}
$$\frac{\mu(A)\Mm(A)+\mu(B)\Mm(B)}{\mu(A)+\mu(B)}=\frac{\nu(A)\Mn(A)+\nu(B)\Mn(B)}{\nu(A)+\nu(B)}.$$
By $\Mm(A)=\Mn(A),\Mm(B)=\Mn(B)$
$$\frac{\mu(A)\Mm(A)+\mu(B)\Mm(B)}{\mu(A)+\mu(B)}=\frac{\nu(A)\Mm(A)+\nu(B)\Mm(B)}{\nu(A)+\nu(B)}.$$
Then
$$\mu(A)\Mm(A)\nu(B)+\mu(B)\Mm(B)\nu(A)=\nu(A)\Mm(A)\mu(B)+\nu(B)\Mm(B)\mu(A)$$ and
$$\Big(\Mm(A) - \Mm(B)\Big)\Big(\mu(A)\nu(B) - \nu(A)\mu(B)\Big)=0.$$
If $\Mm(A) \ne \Mm(B), A\cap B=\emptyset$, then $$\frac{\nu(A)}{\mu(A)}=\frac{\nu(B)}{\mu(B)}$$ has to hold.

Let $I=(0,1), c=\frac{\nu(I)}{\mu(I)}$. If $H\in \mathrm{Dom}(\Mm)$, then let $J$ be an interval such that $I\cap J=\emptyset,\ \sup H<\inf J$. Then $H\cap J=\emptyset$ and $\Mm(I) \ne \Mm(J),\Mm(H) \ne \Mm(J)$ hold. We get that $\frac{\nu(I)}{\mu(I)}=\frac{\nu(J)}{\mu(J)}$ and $\frac{\nu(J)}{\mu(J)}=\frac{\nu(H)}{\mu(H)}$. I.e. $\nu(H)=c\mu(H)$.
\end{proof}

\section{Measures by means}\label{s3}

Let a two variable mean ${\cal{K}}$ be given that is just for calculating the mean of two numbers. Can we extend this mean somehow to some subsets of $\mathbb{R}$? We may have many options for doing so. But now we are going to approach this from measure theory.

We know that $$\frac{\int\limits_a^b x d\lambda}{\lambda([a,b])}=\frac{a+b}{2}$$ that is the arithmetic mean.
Therefore we can try to look for a measure $\mu$ such that $\frac{\int\limits_a^b x d\mu}{\mu([a,b])}={\cal{K}}(a,b)$ where $a,b\in\mathbb{R}$ and $[a,b]\in \mathrm{Dom}({\cal{K}})$.

\begin{thm}\label{tmbm}Let ${\cal{K}}$ be a two variable mean that is symmetric, strictly internal, continuous and $\frac{\partial{\cal{K}}(x,y)}{\partial y}$ exists for all $(x,y)\in \mathrm{Dom}({\cal{K}})$ and it is continuous. Then there exists a measure $\mu$ that is absolutely continuous with respect to $\lambda$ such that ${\cal{K}}(a,b)=\Mm([a,b])$.
\end{thm}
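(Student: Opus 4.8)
The plan is to realise $\mu$ as a Lebesgue--Stieltjes measure $\mu_f$ for a suitable increasing $C^1$ function $f$, so that its density $w=f'\ge 0$ makes $\mu$ automatically absolutely continuous with respect to $\lambda$. Writing out the target $\Mm([a,b])={\cal K}(a,b)$ and integrating $\int_a^b x\,d\mu_f=\int_a^b xf'(x)\,dx$ by parts, the requirement becomes
\begin{equation}
{\cal K}(a,b)\,(f(b)-f(a)) = bf(b)-af(a)-\int_a^b f(t)\,dt \qquad (a<b). \tag{$\dagger$}
\end{equation}
Writing ${\cal K}_y$ for $\partial{\cal K}/\partial y$ and differentiating $(\dagger)$ in $b$ gives the first-order relation
\begin{equation}
\big(b-{\cal K}(a,b)\big)f'(b)={\cal K}_y(a,b)\,(f(b)-f(a)). \tag{$*$}
\end{equation}
Conversely, if $(*)$ holds for all $a<b$ then, letting $R(a,b)$ be the difference of the two sides of $(\dagger)$, a direct computation gives $\partial_b R\equiv 0$ while $R(a,a)=0$, so $R\equiv 0$ and $(\dagger)$ follows. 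Hence it suffices to produce an increasing $f\in C^1$ satisfying $(*)$ for every pair $a<b$.

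First I would record two consequences of the hypotheses that make $(*)$ a regular equation. Symmetry together with continuity and strict internality force ${\cal K}(x,x)=x$, and differentiating this (using ${\cal K}_x(x,x)={\cal K}_y(x,x)$, which comes from symmetry) yields ${\cal K}_y(x,x)=\tfrac12$; strict internality gives $a<{\cal K}(a,b)<b$, so both $b-{\cal K}(a,b)$ and ${\cal K}(a,b)-a$ are strictly positive for $a<b$. Consequently, fixing a reference point $a_0$ and normalizing $f(a_0)=0$, the relation $(*)$ with $a=a_0$,
\[
f'(b)=\frac{{\cal K}_y(a_0,b)}{b-{\cal K}(a_0,b)}\,f(b),
\]
is a regular linear ODE on $b>a_0$, determined up to a positive multiplicative constant. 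Its coefficient has a simple pole at $b=a_0$ whose residue is governed by ${\cal K}_y(a_0,a_0)=\tfrac12$, and a short local analysis shows $f(b)\sim c\,(b-a_0)$ as $b\to a_0^+$; thus $f$ extends to a $C^1$, strictly increasing function across $a_0$ (the branch $b<a_0$ treated symmetrically). This is the candidate $f$, built solely from the one-parameter slice $b\mapsto{\cal K}(a_0,b)$.

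The crux — and the step I expect to be the main obstacle — is consistency: the candidate was manufactured from the single slice $a=a_0$, and it remains to show it satisfies $(*)$ for \emph{every} $a$, equivalently that the companion relation $\big({\cal K}(a,b)-a\big)f'(a)={\cal K}_x(a,b)(f(b)-f(a))$ obtained by differentiating $(\dagger)$ in $a$ also holds. My plan is to extract this independence of reference from the symmetry of ${\cal K}$, in the form ${\cal K}_x(a,b)={\cal K}_y(b,a)$, which ties the left-endpoint derivative to a right-endpoint derivative, together with the continuity of ${\cal K}_y$: concretely one shows that the logarithmic data $\dfrac{{\cal K}_y(a,b)}{b-{\cal K}(a,b)}$ and $\dfrac{{\cal K}_x(a,b)}{{\cal K}(a,b)-a}$ satisfy the integrability (equality of mixed second derivatives / cocycle) condition forced by ${\cal K}$ being a genuine symmetric mean, so that every reference slice yields the same $f$ up to an affine change. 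This is precisely the mechanism that will later let the full mean ${\cal K}(a,b)$ be reconstructed from the one slice ${\cal K}(1,x)$.

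Once $(*)$, hence $(\dagger)$, is established for all $a<b$, the monotone continuous $f$ defines a Borel measure $\mu=\mu_f$; its density $w=f'$ is continuous, so $\mu$ is absolutely continuous with respect to $\lambda$ and meets conditions (1)--(2) on bounded intervals, while $(\dagger)$ reads exactly $\Mm([a,b])={\cal K}(a,b)$. Uniqueness of $\mu$ up to a positive scalar is then already supplied by the earlier uniqueness theorem.
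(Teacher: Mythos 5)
Your steps (1)--(4) are correct, and they are exactly the paper's construction in different clothing: the paper sets $F(b)=e^{\int_{1+\epsilon}^{b}\frac{dx}{x-{\cal{K}}(1,x)}}$ and $f=F'$, and this $f$ satisfies precisely your slice ODE $f'(b)=\frac{{\cal{K}}_y(1,b)}{b-{\cal{K}}(1,b)}f(b)$ with $f(1)=0$ (writing ${\cal{K}}_y$ for $\partial{\cal{K}}/\partial y$). So up to the crux you have reproduced the paper's proof. The gap is step (5), which you rightly flag as the main obstacle but only sketch; and the sketch cannot be completed, because its key claim --- that the required integrability/cocycle condition is ``forced by ${\cal{K}}$ being a genuine symmetric mean'' --- is false under the stated hypotheses.

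Concretely, take the quadratic mean ${\cal{K}}(a,b)=\sqrt{(a^2+b^2)/2}$ on $\mathbb{R}^{+}$: it is symmetric, strictly internal, continuous, and ${\cal{K}}_y$ is continuous. Suppose an absolutely continuous (or merely atomless) Borel measure $\mu$ had ${\cal{M}}^{\mu}([a,b])={\cal{K}}(a,b)$ for all $a<b$. Put $G(x)=\mu([1,x])$, so that $\int_{[1,x]}t\,d\mu={\cal{K}}(1,x)G(x)$; subtracting this relation for $x=a$ and $x=b$ from the requirement on $[a,b]$ yields
$$\frac{G(b)}{G(a)}=\frac{{\cal{K}}(a,b)-{\cal{K}}(1,a)}{{\cal{K}}(a,b)-{\cal{K}}(1,b)}\qquad(1<a<b),$$
so the right-hand side must be multiplicative: its value at $(a,b)=(2,4)$ must equal the product of its values at $(2,3)$ and $(3,4)$. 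For the quadratic mean these three values are $\frac{\sqrt{6.5}-\sqrt{2.5}}{\sqrt{6.5}-\sqrt{5}}\approx 3.0895$, $\frac{\sqrt{12.5}-\sqrt{5}}{\sqrt{12.5}-\sqrt{8.5}}\approx 2.0957$ and $\frac{\sqrt{10}-\sqrt{2.5}}{\sqrt{10}-\sqrt{8.5}}\approx 6.4065$, while $3.0895\cdot 2.0957\approx 6.4747\neq 6.4065$: contradiction. Hence no measure at all generates the quadratic mean, the consistency needed in your step (5) genuinely fails, and the theorem is false as stated; it holds exactly for those means that are already of the form $\frac{bf(b)-af(a)-(F(b)-F(a))}{f(b)-f(a)}$, and that must be imposed as an extra hypothesis rather than derived from symmetry and smoothness. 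For what it is worth, the paper's own proof has the identical hole: its final chain of equalities merely re-derives, by parts, that ${\cal{M}}^{\mu_f}([a,b])=\frac{bf(b)-af(a)-(F(b)-F(a))}{f(b)-f(a)}$, and then asserts without justification that this equals ${\cal{K}}(a,b)$ --- which is exactly the consistency statement you left open; your write-up at least makes the missing step explicit.
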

\begin{proof}
Let us look for $\mu$ in the form $\mu=\mu_f$ where $f$ is an increasing differentiable function. Then $\mu([a,b])=f(b)-f(a)$ and $f'=\frac{d\mu}{d\lambda}$.

Then $${\cal{K}}(a,b)=\frac{\int\limits_a^b x d\mu_f}{\mu_f([a,b])}=\frac{\int\limits_a^b xf' d\lambda}{f(b)-f(a)}=
\frac{[xf-F]_a^b}{f(b)-f(a)}=\frac{bf(b)-af(a)-(F(b)-F(a))}{f(b)-f(a)}$$ where $F$ is a primitive function of $f$. We can assume that there is a point $a$ such that $f(a)=F(a)=0$ because $f$ and $f+c$, $F$ and $F+d$ have the same effect ($c,d\in\mathbb{R}$). Let us suppose that $a=1$ i.e. $f(1)=F(1)=0$. Then we get ${\cal{K}}(1,x)=\frac{xf(x)-F(x)}{f(x)}=x-\frac{F(x)}{f(x)}\ (x\ne1)$.

Let us also note that $f$ and $cf$ (or $F$ and $cF$) give the same result ($c\in\mathbb{R}$) i.e. if $f$ solves our problem, then so does $cf$ because ${\cal{M}}^{\mu_f}={\cal{M}}^{\mu_{cf}}$.

We can write $\frac{F(x)}{f(x)}=x-{\cal{K}}(1,x)$.
Equivalently $\frac{f(x)}{F(x)}=\frac{1}{x-{\cal{K}}(1,x)}$ as ${\cal{K}}$ is strictly internal we do not divide here by 0.

Then $(\log F(x))'=\frac{F'(x)}{F(x)}=\frac{f(x)}{F(x)}=\frac{1}{x-{\cal{K}}(1,x)}$

Let $\varepsilon>0,b>1+\varepsilon$. (If $b<1$, then we can handle that similarly.)

$\int\limits_{1+\varepsilon}^b(\log F(x))'=\log F(b)-\log F(1+\varepsilon)=\int\limits_{1+\varepsilon}^b\frac{1}{x-{\cal{K}}(1,x)}dx$

The integral on the right hand side exists because $[1+\varepsilon,b]$ is compact, $x-{\cal{K}}(1,x)$ is continuous hence it takes its minimum but it is $>0$ since ${\cal{K}}$ is strictly internal. 
Set $c=F(1+\varepsilon)$. 
Then
\[F(b)=ce^{\int\limits_{1+\varepsilon}^b\frac{1}{x-{\cal{K}}(1,x)}dx}\]
but as we noted a constant factor can be abandoned hence let
\[F(b)=e^{\int\limits_{1+\varepsilon}^b\frac{1}{x-{\cal{K}}(1,x)}dx}.\]
Then we get
\[f(b)=\frac{1}{b-{\cal{K}}(1,b)}e^{\int\limits_{1+\varepsilon}^b\frac{1}{x-{\cal{K}}(1,x)}dx}=\frac{1}{b-{\cal{K}}(1,b)}F(b)\]
\[f'(b)=\frac{\frac{\partial{\cal{K}}(1,b)}{\partial b}}{(b-{\cal{K}}(1,b))^2}e^{\int\limits_{1+\varepsilon}^b\frac{1}{x-{\cal{K}}(1,x)}dx}=\frac{\frac{\partial{\cal{K}}(1,b)}{\partial b}}{(b-{\cal{K}}(1,b))^2}F(b)=\frac{\frac{\partial{\cal{K}}(1,b)}{\partial b}}{b-{\cal{K}}(1,b)}f(b)\]

We got $f$ and $F$  by using ${\cal{K}}(1,x)$ only.
Therefore we also have to check whether $f$ and $F$ fulfills our original request i.e. they work for ${\cal{K}}(a,b)$ as well.

$$\frac{\int\limits_a^b x d\mu_f}{\mu_f([a,b])}=\frac{\int\limits_1^b x d\mu_f-\int\limits_1^a x d\mu_f}{f(b)-f(a)}=
\frac{\mu_f([1,b]){\cal{K}}(1,b)-\mu_f([1,a]){\cal{K}}(1,a)}{f(b)-f(a)}=$$

$$\frac{bf(b)-1f(1)-(F(b)-F(1))-(af(a)-1f(1)-(F(a)-F(1)))}{f(b)-f(a)}=$$

\[\frac{bf(b)-af(a)-(F(b)-F(a))}{f(b)-f(a)}={\cal{K}}(a,b).\qedhere\]
\end{proof}

\begin{rem}We can simplify the conditions because if ${\cal{K}}(x,y)$ is continuously differentiable by its second variable, then it continuously differentiable by its first variable since ${\cal{K}}$ is symmetric, therefore ${\cal{K}}$ is totally differentiable hence continuous.
\end{rem}

As a consequence, we can prove that all values of those two variable means are determined by the values taken at points $(1,x)$, i.e. it is enough to know the values at points $(1,x)$, and from them, we can get any other value.

\begin{cor}Let ${\cal{K}}$ be a two variable mean that is symmetric, strictly internal, continuous and $\frac{\partial{\cal{K}}(x,y)}{\partial y}$ exists for all $(x,y)\in \mathrm{Dom}({\cal{K}})$ and it is continuous. Then ${\cal{K}}|_{{\cal H}'}$ determines ${\cal{K}}$ where ${\cal H}'=\{(1,x):x\in\mathbb{R}\}$.
\end{cor}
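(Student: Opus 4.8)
The plan is to obtain the corollary as an immediate consequence of the construction carried out in Theorem \ref{tmbm}, reading ``$g$ determines ${\cal{K}}$'' in the constructive sense: I want to exhibit an explicit formula that recovers ${\cal{K}}(a,b)$ from the single-variable function $g(x)={\cal{K}}(1,x)$ alone (equivalently, any two means satisfying the hypotheses that agree on all pairs $(1,x)$ must coincide everywhere).

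First I would invoke Theorem \ref{tmbm} to produce the measure $\mu=\mu_f$ with $\Mm([a,b])={\cal{K}}(a,b)$, and then simply inspect how $f$ and $F$ were manufactured in that proof. The crucial observation is already recorded there: one has $F(b)=\exp\!\left(\int_{1+\epsilon}^b \frac{dx}{x-g(x)}\right)$ and $f(b)=\frac{1}{b-g(b)}F(b)$, so both $f$ and $F$ are built from $g$ and nothing else. Substituting these into the closed form $\Mm([a,b])=\frac{bf(b)-af(a)-(F(b)-F(a))}{f(b)-f(a)}$ established in the theorem yields ${\cal{K}}(a,b)$ as an explicit expression in $g$, which is exactly the assertion.

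The one point that needs care---and the only place where a genuine argument is required rather than bookkeeping---is checking that the formula really depends on $g$ only, i.e.\ that the auxiliary choices made along the way are irrelevant. There are three such choices: the additive normalization $f(1)=F(1)=0$, the multiplicative constant hidden in $C=\log F(1+\epsilon)$, and the base point $1+\epsilon$. I would argue that none of them affects ${\cal{K}}(a,b)$: the Lebesgue--Stieltjes measure $\mu_f$ is insensitive to adding a constant to $f$, and the mean $\Mm$ is homogeneous of degree zero in the pair $(f,F)$, so replacing $f$ by $cf$ and $F$ by $cF$ leaves the defining ratio unchanged. Hence the positive scalar left undetermined by $g$ is precisely the scalar shown to be irrelevant in the uniqueness theorem above, and ${\cal{K}}(a,b)$ is a bona fide function of $g$. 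I expect this invariance check to be the main, though modest, obstacle; the remainder is a direct appeal to Theorem \ref{tmbm}.
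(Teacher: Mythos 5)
Your proposal is correct and takes essentially the same route as the paper: the paper's proof likewise invokes the $f$ constructed in Theorem \ref{tmbm} (which is built from $g(x)={\cal{K}}(1,x)$ alone) and reads off ${\cal{K}}(a,b)$ from the closed-form ratio, writing it as ${\cal{K}}(a,b)=\frac{(f(b)-f(1)){\cal{K}}(1,b)-(f(a)-f(1)){\cal{K}}(1,a)}{f(b)-f(a)}$, which is equivalent to your version with $F$. Your extra invariance check of the normalizations (additive constants, scaling, base point $1+\epsilon$) is sound but not strictly necessary, since fixing any one admissible choice already yields a formula for ${\cal{K}}(a,b)$ in terms of $g$.
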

\begin{proof} Using the constructed $f$ in Theorem \ref{tmbm} we get 
$${\cal{K}}(a,b)=\frac{(f(b)-f(1)){\cal{K}}(1,b)-(f(a)-f(1)){\cal{K}}(1,a)}{f(b)-f(a)}$$
and $f$ is calculated by the function $x\mapsto{\cal{K}}(1,x)$.
\end{proof}

\begin{rem}If $f$ is an increasing differentiable function, $F$ is one of its primitive functions, then 
$${\cal{K}}(a,b)=\frac{bf(b)-af(a)-(F(b)-F(a))}{f(b)-f(a)}$$
define a symmetric, strictly internal, continuous two variable mean $(a<b)$.
\par For strictly internality we can give a direct proof when $f'\ne 0$.
By Cauchy's mean value theorem there is $\xi\in(a,b)$ such that ${\cal{K}}(a,b)=\frac{\xi f'(\xi)}{f'(\xi)}=\xi$.
\end{rem}

\begin{ex}Let ${\cal{K}}$ be the geometric mean: ${\cal{K}}(a,b)=\sqrt{ab}$. Then $F(x)=\frac{(\sqrt{x}-1)^2}{e^2},f(x)=\frac{1}{e^2}(1-\frac{1}{\sqrt{x}}),f'(x)=\frac{1}{2e^2}\frac{1}{x\sqrt{x}}$. Hence $\mu([a,b])=\mu_f([a,b])=\frac{1}{e^2}(\frac{1}{\sqrt{a}}-\frac{1}{\sqrt{b}})$.
\end{ex}
\begin{proof} We know that 
$$F(b)=e^{\int\limits_{1+\varepsilon}^b\frac{1}{x-{\cal{K}}(1,x)}dx}$$
hence we have to calculate 
\[\int\limits_{1+\varepsilon}^b\frac{1}{x-\sqrt{x}}dx=\int\limits_{1+\varepsilon}^b\frac{1}{\sqrt{x}(\sqrt{x}-1)}dx=
\int\limits_{1+\varepsilon}^b\frac{1}{\sqrt{x}-1}dx-\int\limits_{1+\varepsilon}^b\frac{1}{\sqrt{x}}dx.\] 
Let us apply the following substitution in the first case $y=\sqrt{x}-1$. Then we end up with $\int\frac{1}{\sqrt{x}-1}dx=\int\frac{1}{y}2(y+1)dy=2y+2\log y+C=2(\sqrt{x}-1)+2\log(\sqrt{x}-1)+C$. Finally $\int\frac{1}{x-\sqrt{x}}dx=-2+2\log(\sqrt{x}-1)+C$. So we get $F(x)=\frac{(\sqrt{x}-1)^2}{e^2}$. Then $f,f'$ can be obtained easily from that.

Let us verify that it works. $\int\limits_a^b xf'=\frac{1}{e^2}\int\limits_a^b\frac{1}{\sqrt{x}}=\frac{1}{e^2}(\sqrt{b}-\sqrt{a})$. Then 
\[\frac{\int\limits_a^b xf'}{f(b)-f(a)}=\frac{\frac{1}{e^2}(\sqrt{b}-\sqrt{a})}{\frac{1}{e^2}(\frac{1}{\sqrt{a}}-\frac{1}{\sqrt{b}})}=\sqrt{ab}.\qedhere\]
\end{proof}

It is well known that the (ordinary) geometric mean is a quasi-arithmetic mean i.e. the geometric mean can be derived from the arithmetic mean using the $\log$ function: ${\cal{G}}(a,b)=e^{\frac{\log a+\log b}{2}}$. One might ask whether the generalized geometric mean can be derived from $\mathrm{Avg}$ in the same way i.e. whether ${\cal{G}}(H)=e^{\mathrm{Avg}\log H}$ holds where $\log H=\{\log h:h\in H\}$ and $H\subset\mathbb{R}^+$ is a Borel set. The answer is negative as the next proposition shows.

\begin{prp}\label{peavglogneggm}Let $\mu$ be the Borel measure associated to the geometric mean. Then there is a Borel set $H\subset\mathbb{R}^+$ such that $e^{\mathrm{Avg}\log H}\ne\Mm(H)$.
\end{prp}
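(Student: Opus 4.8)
The strategy is to exploit the fact that the two functionals in question necessarily agree on every single interval, so any counterexample must come from a set with a more complicated structure; the simplest such set is a union of two disjoint intervals. Recall from the previous example that the measure $\mu$ associated to the geometric mean satisfies $\Mm([a,b])=\sqrt{ab}$ and $\mu([a,b])=\frac{1}{e^2}\bigl(\tfrac{1}{\sqrt a}-\tfrac{1}{\sqrt b}\bigr)$. On a single interval $[a,b]$ we have $\log[a,b]=[\log a,\log b]$, so $Avg(\log[a,b])=\tfrac{1}{2}(\log a+\log b)$ and hence $e^{Avg(\log[a,b])}=\sqrt{ab}=\Mm([a,b])$. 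Thus the difference, if any, is invisible on intervals and must be produced by the union structure. One should also note that both functionals are homogeneous of degree one (for $\Mm$ this follows from $\mu(tH)=\tfrac{1}{\sqrt t}\mu(H)$ together with Proposition \ref{pmmwa}, and for $e^{Avg(\log H)}$ from the translation equivariance of $Avg$ applied to $\log(tH)=\log t+\log H$), so rescaling alone cannot separate them.

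The plan is therefore to take $H=[a,b]\cup[c,d]$ with the two intervals disjoint and to compute both sides explicitly. For $\Mm(H)$ I would invoke Proposition \ref{pmmwa} and use the algebraic simplification $\bigl(\tfrac{1}{\sqrt a}-\tfrac{1}{\sqrt b}\bigr)\sqrt{ab}=\sqrt b-\sqrt a$ to obtain the clean closed form
\[\Mm(H)=\frac{(\sqrt b-\sqrt a)+(\sqrt d-\sqrt c)}{\bigl(\tfrac{1}{\sqrt a}-\tfrac{1}{\sqrt b}\bigr)+\bigl(\tfrac{1}{\sqrt c}-\tfrac{1}{\sqrt d}\bigr)},\]
where the common factor $\tfrac{1}{e^2}$ cancels. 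On the other side, a direct computation of $Avg$ over $\log H=[\log a,\log b]\cup[\log c,\log d]$ gives
\[e^{Avg(\log H)}=\exp\left(\frac{(\log^2 b-\log^2 a)+(\log^2 d-\log^2 c)}{2\bigl[(\log b-\log a)+(\log d-\log c)\bigr]}\right),\]
which is an exponential of a $\lambda$-weighted average of squared logarithms and is structurally unrelated to the $\mu$-weighted average of geometric means above. It then remains to select a concrete $H$ for which the two numbers genuinely disagree.

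The main obstacle is precisely this last selection, because the two expressions can coincide accidentally on \emph{symmetric} configurations: for instance $H=[1,4]\cup[9,36]$ yields $\Mm(H)=6=e^{Avg(\log H)}$, the equality forced by the two log-intervals having equal length and conveniently placed midpoints. To avoid such balanced coincidences one must use intervals of unequal logarithmic length. A concrete asymmetric choice such as $H=[1,4]\cup[9,400]$ does the job: the closed form above gives $\Mm(H)=\tfrac{18}{1/2+17/60}=\tfrac{1080}{47}\approx 22.98$, whereas the second formula gives $e^{Avg(\log H)}\approx 24.15$, so $e^{Avg(\log H)}\ne\Mm(H)$ and the proposition follows. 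The only genuinely delicate point is thus bookkeeping — verifying the strict numerical inequality and confirming that the chosen intervals are not in a degenerate, self-cancelling position — rather than any conceptual difficulty.
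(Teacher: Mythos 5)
Your proposal is correct and takes essentially the same approach as the paper: the paper also exhibits a union of two disjoint intervals (it uses $H=[1,e^2]\cup[e^4,e^8]$, you use $[1,4]\cup[9,400]$) and compares the same two closed-form expressions, $\Mm(H)=\frac{(\sqrt b-\sqrt a)+(\sqrt d-\sqrt c)}{(1/\sqrt a-1/\sqrt b)+(1/\sqrt c-1/\sqrt d)}$ against the exponential of the $\lambda$-average over $\log H$, concluding by a numerical disagreement. Your extra observations — degree-one homogeneity of both functionals and the accidental equality on the symmetric configuration $[1,4]\cup[9,36]$ (both sides equal $6$, which checks out) — are sound and explain why an asymmetric choice is needed, but they do not alter the method.
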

\begin{proof}Let $H=[1,e^2]\cup[e^4,e^8]$. Then $\log H=[0,2]\cup[4,8]$ hence $\mathrm{Avg}\log H=\frac{1}{2}\frac{2^2-0^2+8^2-4^2}{2+4}=\frac{13}{3}$ which gives that $e^{\mathrm{Avg}\log H}=e^{\frac{13}{3}}$.

\[\Mm(H)=\frac{\sqrt{e^2}-\sqrt{1}+\sqrt{e^8}-\sqrt{e^4}}{\frac{1}{\sqrt{1}}-\frac{1}{\sqrt{e^2}}+\frac{1}{\sqrt{e^4}}-\frac{1}{\sqrt{e^8}}}=\frac{e-1+e^4-e^2}{1-\frac{1}{e}+\frac{1}{e^2}-\frac{1}{e^4}}\ne e^{\frac{13}{3}}.\qedhere\]
\end{proof}

We can go further by proving that the generalized mean ${\cal{M}}(H)=e^{\mathrm{Avg}\log H}$ is not a mean by measure.

\begin{thm}There does not exist a p-Borel measure $\mu$ on $\mathbb{R}^+$ such that $e^{\mathrm{Avg}\log H}=\Mm(H)$ (where $H$ is any Borel set).
\end{thm}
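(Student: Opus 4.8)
The plan is to reduce the non-existence to the concrete failure already recorded in Proposition \ref{peavglogneggm}, exploiting that a mean-by-measure is pinned down by its values on intervals (Proposition \ref{pMmdetbyvoi}). So I would argue by contradiction and suppose that some Borel measure $\mu$ on $\mathbb{R}^+$ satisfies $e^{Avg\log H}=\Mm(H)$ for every Borel set $H$.

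The first step is to read off what this assumption forces on intervals. For $0<a<b$ we have $\log[a,b]=[\log a,\log b]$, and since $Avg$ of an interval is its midpoint, $e^{Avg\log[a,b]}=e^{(\log a+\log b)/2}=\sqrt{ab}$. Hence $\Mm([a,b])=\sqrt{ab}$, i.e. on intervals $\Mm$ coincides exactly with the ordinary geometric mean.

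The second step compares $\mu$ with the measure $\mu_0=\mu_f$ associated to the geometric mean in the preceding Example, which by construction also satisfies ${\cal{M}}^{\mu_0}([a,b])=\sqrt{ab}$. Thus $\Mm$ and ${\cal{M}}^{\mu_0}$ agree on every (open) interval. By Proposition \ref{pMmdetbyvoi} the value of a mean-by-measure on an arbitrary Borel set is determined by its values on the open intervals (the reconstruction running through the weighted-average identity \ref{pmmwa} together with Cantor-continuity \ref{pmmcc}); consequently $\Mm(H)={\cal{M}}^{\mu_0}(H)$ for every Borel set $H$ in the common domain.

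The final step is to evaluate at $H=[1,e^2]\cup[e^4,e^8]$. Our standing assumption gives $\Mm(H)=e^{Avg\log H}$, while the previous step gives $\Mm(H)={\cal{M}}^{\mu_0}(H)$, so ${\cal{M}}^{\mu_0}(H)=e^{Avg\log H}$. This contradicts Proposition \ref{peavglogneggm}, which computed ${\cal{M}}^{\mu_0}(H)\ne e^{Avg\log H}$ for precisely this set; hence no such $\mu$ can exist. I expect the only genuinely delicate point to be the passage from agreement on intervals to agreement on all Borel sets, since one must be sure that the interval values alone (and not some extra measure-specific data) control $\Mm(H)$. This is exactly the content of Proposition \ref{pMmdetbyvoi}, so once that result is invoked the remaining steps are routine.
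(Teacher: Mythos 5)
Your proof is correct and follows essentially the same route as the paper: reduce to agreement with the geometric mean on intervals, invoke Proposition \ref{pMmdetbyvoi} to extend that agreement to all Borel sets, and then contradict Proposition \ref{peavglogneggm} on the set $[1,e^2]\cup[e^4,e^8]$. Your write-up merely makes explicit the steps the paper states tersely.
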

\begin{proof}Clearly $e^{\mathrm{Avg}\log ((a,b))}=\sqrt{ab}\ (a,b\in\mathbb{R}^+)$ because $\mathrm{Avg}((\log a,\log b))=\frac{\log a+\log b}{2}=\log\sqrt{ab}$\ \  (here $(a,b)$ and $(\log a,\log b)$ denote open intervals). If we assumed that $e^{\mathrm{Avg}\log H}$ was a mean by measure, then by \ref{pMmdetbyvoi} we would get that it would be equal to the mean by measure obtained from the (ordinary) geometric mean. But it is false by \ref{peavglogneggm}. 
\end{proof}

One can ask when a generalized mean can be derived as a mean by measure. We can easily provide some sufficient conditions.

\begin{thm}\label{pgenmderasmbm}Let ${\cal M}$ be a generalized mean on the Borel sets of $\mathbb{R}$ and let ${\cal K}(a,b)={\cal M}([a,b])\ (a,b\in\mathbb{R})$ be the associated two variable mean. If ${\cal K}$ determines ${\cal M}$, ${\cal K}$ is strictly internal and ${\cal{K}}(x,y)$ is continuously differentiable by its second variable, then there exists a measure $\mu$ that is absolutely continuous with respect to $\lambda$ such that ${\cal M}=\Mm$.
\end{thm}
\begin{proof}\ref{tmbm}.
\end{proof}

\begin{cor}Let ${\cal M}$ be a generalized mean on the Borel sets of $\mathbb{R}$. If ${\cal M}$ is determined by ${\cal M}|_{{\cal H}'}$ where ${\cal H}'=\{[a',b']:a'<b'\}$, ${\cal M}$ is strict strong internal and the function $f(x,y)={\cal M}([x,y])$ is totally differentiable, then there exists a measure $\mu$ that is absolutely continuous with respect to $\lambda$ such that ${\cal M}=\Mm$.
\end{cor}
\begin{proof}Strict strong internality gives that ${\cal M}([x,y])$ as a two variable mean is strictly internal.
\end{proof}

Now we prove that the inequality between the arithmetic and the geometric mean remains valid for the generalized means too.

\begin{thm}Let $\mu$ be the Borel measure associated to the geometric mean. Then $H\in \mathrm{Dom}(\Mm)$ implies that $\Mm(H)\leq \mathrm{Avg}(H)$.
\end{thm}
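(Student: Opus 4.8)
The plan is to recognize that $Avg = {\cal{M}}^{\lambda}$ and then to invoke the sufficient condition for inequalities, Proposition \ref{suffcondforineq}, taking $\mu$ to be the geometric measure and $\nu = \lambda$. Indeed, directly from the definitions $Avg(H) = {\cal{M}}^{\lambda}(H)$, so the assertion $\Mm(H) \leq Avg(H)$ is exactly $\Mm(H) \leq {\cal{M}}^{\lambda}(H)$. Here $\mu = \mu_f$ with $f(x) = \frac{1}{e^2}\bigl(1 - \frac{1}{\sqrt{x}}\bigr)$ and $f'(x) = \frac{1}{2e^2}\frac{1}{x\sqrt{x}}$ as computed in the preceding example, while $\lambda = \mu_g$ for $g(x) = x$. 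Since $f' > 0$ is bounded below by a positive constant on each bounded interval $(0,M]$, we have $\mu(H) \geq f'(M)\lambda(H)$ there, so $Dom(\Mm) \subseteq Dom({\cal{M}}^{\lambda})$ and the two means are comparable wherever $\Mm$ is defined.

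Next I would verify the two hypotheses of Proposition \ref{suffcondforineq} for this pair $(\mu,\nu)=(\mu_f,\mu_g)$. Condition (1) requires $\Mm(J) \leq {\cal{M}}^{\lambda}(J)$ for every bounded open interval $J = (a,b)$; but $\Mm((a,b)) = \sqrt{ab}$ and ${\cal{M}}^{\lambda}((a,b)) = \frac{a+b}{2}$, so condition (1) is precisely the classical AM--GM inequality $\sqrt{ab} \leq \frac{a+b}{2}$. Condition (2) I would obtain from Proposition \ref{suffcondforcond2}: it suffices that $\frac{g'(x)}{f'(x)}$ be increasing, and here $\frac{g'(x)}{f'(x)} = \frac{1}{f'(x)} = 2e^2 x^{3/2}$, which is manifestly increasing on $\mathbb{R}^+$.

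With both conditions in hand, Proposition \ref{suffcondforineq} immediately yields $\Mm(H) \leq {\cal{M}}^{\lambda}(H) = Avg(H)$ for every $H \in Dom(\Mm)$, which is the claim.

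The main difficulty here is conceptual rather than computational: the key move is to cast $Avg$ as the measure-generated mean ${\cal{M}}^{\lambda}$ and to organize the argument so that the two abstract hypotheses of \ref{suffcondforineq} collapse to (1) the scalar AM--GM inequality on intervals and (2) the monotonicity of $1/f'$. Once the problem is framed this way, all the heavy lifting is already done by Propositions \ref{suffcondforineq} and \ref{suffcondforcond2}, and nothing beyond the explicit form of $f'$ from the geometric-mean example is required.
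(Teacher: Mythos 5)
Your proof is correct and is essentially the paper's own argument: the paper likewise reduces the theorem to Propositions \ref{suffcondforineq} and \ref{suffcondforcond2}, with condition (1) being the classical AM--GM inequality on intervals (left implicit there) and condition (2) coming from the monotonicity of $\frac{g'(x)}{f'(x)}$, which for the paper's unnormalized choice $f(x)=1-\frac{1}{\sqrt{x}}$, $g(x)=x-1$ equals $x\sqrt{x}$, matching your $2e^2x\sqrt{x}$ up to an immaterial constant. One small slip in a side remark: your bound $\mu(H)\geq f'(M)\lambda(H)$ on $(0,M]$ actually yields $Dom({\cal{M}}^{\lambda})\subseteq Dom(\Mm)$, while the inclusion $Dom(\Mm)\subseteq Dom({\cal{M}}^{\lambda})$ that you need follows instead from absolute continuity, since $\lambda(H)=0$ implies $\mu(H)=\int_H f'\,d\lambda=0$.
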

\begin{proof} By \ref{suffcondforineq} and \ref{suffcondforcond2} we only have to show that $\frac{g'(x)}{f'(x)}$ is increasing for $g(x)=x-1$ and $f(x)=1-\frac{1}{\sqrt{x}}$. But that is equal to $x\sqrt{x}$.
\end{proof}

\begin{cor}If $I_i=(a_i,b_i)$ and $b_i<a_j$ when $i<j\ (1\leq i,j\leq n)$, then \[\frac{\sum\limits_{i=1}^n\sqrt{b_i}-\sqrt{a_i}}{\sum\limits_{i=1}^n\frac{1}{\sqrt{a_i}}-\frac{1}{\sqrt{b_i}}}\leq
\frac{1}{2}\frac{\sum\limits_{i=1}^n b_i^2-a_i^2}{\sum\limits_{i=1}^n b_i-a_i}.\tag*{\qed}\]
\end{cor}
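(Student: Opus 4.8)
The plan is to recognize this corollary as nothing more than the preceding AM--GM theorem $\Mm(H)\le Avg(H)$ applied to the particular set $H=\bigcup_{i=1}^n I_i$, once both means are written out explicitly. First I would note that the hypothesis $b_i<a_j$ for $i<j$ forces the open intervals $I_i$ to be pairwise disjoint (indeed separated and increasingly ordered), so that $H$ is a bounded open set lying in $Dom(\Mm)\cap Dom(Avg)$ and Proposition \ref{pmmwa} is legitimately applicable to this disjoint union.

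Next I would compute the left-hand side. By Proposition \ref{pmmwa},
$$\Mm(H)=\frac{\sum_{i=1}^n\mu(I_i)\,\Mm(I_i)}{\sum_{i=1}^n\mu(I_i)}.$$
From the worked geometric-mean example we have $\mu(I_i)=\frac{1}{e^2}\bigl(\frac{1}{\sqrt{a_i}}-\frac{1}{\sqrt{b_i}}\bigr)$ and $\Mm(I_i)={\cal{K}}(a_i,b_i)=\sqrt{a_ib_i}$, whence each numerator term collapses to $\mu(I_i)\Mm(I_i)=\int_{a_i}^{b_i}xf'\,d\lambda=\frac{1}{e^2}(\sqrt{b_i}-\sqrt{a_i})$. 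The common factor $\frac{1}{e^2}$ cancels between numerator and denominator, leaving precisely the left-hand side of the displayed inequality. For the right-hand side, $Avg(H)=\frac{\int_H x\,d\lambda}{\lambda(H)}$ with $\int_H x\,d\lambda=\sum_i\frac{b_i^2-a_i^2}{2}$ and $\lambda(H)=\sum_i(b_i-a_i)$, which is exactly the right-hand side. The inequality then follows at once from the AM--GM theorem just proved.

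There is no genuine obstacle here: the whole content is the bookkeeping that identifies the two explicit quotients with $\Mm(H)$ and $Avg(H)$. The only points demanding a little care are confirming the disjointness from $b_i<a_j$ so that Proposition \ref{pmmwa} applies cleanly, and reading off from the geometric-mean example that the weighted term $\mu(I_i)\Mm(I_i)$ equals $\frac{1}{e^2}(\sqrt{b_i}-\sqrt{a_i})$, so that the awkward $e^2$ factors disappear and the bare sums of the statement emerge.
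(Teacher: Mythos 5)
Your proposal is correct and matches the paper's intent exactly: the paper states this corollary with a \qed because it is precisely the preceding theorem $\Mm(H)\leq Avg(H)$ applied to $H=\bigcup_{i=1}^n I_i$, with both sides computed explicitly from $\mu(I_i)=\frac{1}{e^2}\bigl(\frac{1}{\sqrt{a_i}}-\frac{1}{\sqrt{b_i}}\bigr)$ and $\int_{I_i}x\,d\mu=\frac{1}{e^2}(\sqrt{b_i}-\sqrt{a_i})$, the factors $\frac{1}{e^2}$ cancelling. Your bookkeeping (disjointness from $b_i<a_j$, Proposition \ref{pmmwa}, and the identification of both quotients) is exactly the omitted routine verification.
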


\begin{ex}Let ${\cal{K}}$ be the harmonic mean: ${\cal{K}}(a,b)=\frac{2}{\frac{1}{a}+\frac{1}{b}}$. Then $F(x)=x-2+\frac{1}{x},f(x)=1-\frac{1}{x^2},f'(x)=\frac{2}{x^3}$. Hence $\mu([a,b])=\mu_f([a,b])=\frac{1}{a^2}-\frac{1}{b^2}$.
\end{ex}
\begin{proof} We have to calculate $\int\frac{1}{x-\frac{2}{1+\frac{1}{x}}}dx=\int\frac{x+1}{x(x-1)}dx=\int\frac{2}{x-1}dx-\int\frac{1}{x}dx=2\log (x-1)-\log x+C=\log\frac{(x-1)^2}{x}+C$ hence $F(x)=x-2+\frac{1}{x}$.

Let us verify it. $\int\limits_a^b xf'=\int\limits_a^b \frac{2}{x^2}=2\frac{b-a}{ab}$. $f(b)-f(a)=\frac{1}{a^2}-\frac{1}{b^2}=\frac{(a+b)(b-a)}{a^2b^2}$. \[\frac{\int\limits_a^b xf'}{f(b)-f(a)}=2\frac{b-a}{ab}\frac{a^2b^2}{(b-a)(a+b)}=2\frac{1}{\frac{1}{a}+\frac{1}{b}}.\qedhere\]
\end{proof}

\begin{ex}Let ${\cal{K}}$ be the logarithmic mean: ${\cal{K}}(a,b)=\frac{a-b}{\log a-\log b}$. Then one can easily verifies that $F(x)=x\log x-x+1,f(x)=\log x,f'(x)=\frac{1}{x}$. Hence $\mu([a,b])=\mu_f([a,b])=\log b-\log a$. \qed
\end{ex}

\subsection{An alternative way}

First let us present another way how the arithmetic mean can be got naturally by some integral.

\medskip

One can easily show that 
\[\int\limits_a^b\int\limits_c^d\frac{x+y}{2}dxdy=\frac{(b^2-a^2)(d-c)+(d^2-c^2)(b-a)}{4}\]
which gives that
\[\frac{\int\limits_a^b\int\limits_c^d\frac{x+y}{2}dxdy}{\lambda([a,b])\lambda([c,d])}=\frac{a+b+c+d}{4}.\]
Hence we end up with
\[\frac{\int\limits_a^b\int\limits_a^b\frac{x+y}{2}dxdy}{\lambda([a,b])^2}=\frac{a+b}{2}.\]

Similarly, for a given two variable mean ${\cal{K}}$ one can try to find a measure $\mu$ on $\mathbb{R}$ such that 
$$\frac{\int\limits_a^b\int\limits_a^b\frac{x+y}{2}d\mu\times\mu}{\mu\times\mu([a,b]\times[a,b])}=
\frac{\int\limits_a^b\int\limits_a^b\frac{x+y}{2}d\mu(x) d\mu(y)}{\mu([a,b])^2}={\cal{K}}(a,b).$$

\begin{thm}\label{tmbm2}Let ${\cal{K}}$ be a two variable mean that is symmetric, strictly internal, continuous and $\frac{\partial{\cal{K}}(x,y)}{\partial y}$ exists for all $(x,y)\in \mathrm{Dom}({\cal{K}})$ and it is continuous. Then there exists a measure $\mu$ that is absolutely continuous with respect to $\lambda$ such that $\displaystyle\frac{\int\limits_a^b\int\limits_a^b\frac{x+y}{2}d\mu(x) d\mu(y)}{\mu([a,b])^2}={\cal{K}}(a,b)$.
\end{thm}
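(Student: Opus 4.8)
The plan is to recognize that the double-integral expression in fact collapses to the ordinary single-integral mean $\Mm([a,b])$ already treated in Theorem \ref{tmbm}, so that the very same measure does the job and no new construction is required.

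First I would compute the numerator directly. Writing $\frac{x+y}{2}=\frac{x}{2}+\frac{y}{2}$ and splitting the integral, each of the two resulting double integrals factors into a product of single integrals (Fubini's theorem applies since $\mu$ is finite on the bounded set $[a,b]$ and the integrand is bounded there). Because $x$ and $y$ play symmetric roles, the two pieces coincide, and one obtains
$$\int\limits_a^b\int\limits_a^b\frac{x+y}{2}\,d\mu(x)\,d\mu(y)=\mu([a,b])\int\limits_a^b x\,d\mu.$$
Dividing by $\mu([a,b])^2$ then yields exactly
$$\frac{\int\limits_a^b\int\limits_a^b\frac{x+y}{2}\,d\mu(x)\,d\mu(y)}{\mu([a,b])^2}=\frac{\int\limits_a^b x\,d\mu}{\mu([a,b])}=\Mm([a,b]).$$

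Hence the double-integral mean coincides with $\Mm([a,b])$ for \emph{every} measure $\mu$ of the kind considered, and the theorem reduces instantly to Theorem \ref{tmbm}. I would simply invoke that theorem to obtain the absolutely continuous measure $\mu$ for which $\Mm([a,b])={\cal{K}}(a,b)$ under the stated smoothness hypotheses on ${\cal{K}}$, and the desired identity follows immediately from the displayed reduction.

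The only step requiring a moment's care is the interchange of the order of integration and the verification that the two half-integrals are equal; once the numerator is seen to factor as $\mu([a,b])\int_a^b x\,d\mu$, there is no genuine obstacle. In particular, no new differential equation, no new integration of $1/(x-{\cal{K}}(1,x))$, and no new function $f$ need be produced beyond the one already constructed in the proof of Theorem \ref{tmbm}.
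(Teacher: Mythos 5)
Your proof is correct and is essentially the paper's own argument: the paper performs the same collapse of the double integral, just written out with the density $f'$ inside the ansatz $\mu=\mu_f$, arrives at the formula $\frac{bf(b)-af(a)-(F(b)-F(a))}{f(b)-f(a)}$ of Theorem \ref{tmbm}, and concludes that the same measure works. Your Fubini-plus-symmetry version is marginally cleaner in that it establishes $\int\limits_a^b\int\limits_a^b\frac{x+y}{2}\,d\mu(x)\,d\mu(y)=\mu([a,b])\int\limits_a^b x\,d\mu$ for every admissible measure $\mu$, not only those of the form $\mu_f$, but the underlying computation and the reduction to Theorem \ref{tmbm} are the same.
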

\begin{proof} We follow exactly the same way than in Theorem \ref{tmbm}.

Let us look for $\mu$ in the form $\mu=\mu_f$ where $f$ is an increasing differentiable function. Then $\mu([a,b])=f(b)-f(a)$ and $f'=\frac{d\mu}{d\lambda}$.
Let $F$ be a primitive function of $f$.

Then $$\frac{\int\limits_a^b\int\limits_a^b\frac{x+y}{2}d\mu(x) d\mu(y)}{\mu([a,b])^2}=
\frac{\int\limits_a^b\int\limits_a^b\frac{x+y}{2}f'(x)f'(y)dxdy}{(f(b)-f(a))^2}=$$

$$\frac{\int\limits_a^b\frac{1}{2}f'(y)[bf(b)-af(a)-(F(b)-F(a))+y(f(b)-f(a))]dy}{(f(b)-f(a))^2}=$$

$$\frac{\frac{1}{2}[bf(b)-af(a)-(F(b)-F(a))](f(b)-f(a))}{(f(b)-f(a))^2}+$$

$$\frac{\frac{1}{2}(f(b)-f(a))[bf(b)-af(a)-(F(b)-F(a))]}{(f(b)-f(a))^2}=$$

$$\frac{[bf(b)-af(a)-(F(b)-F(a))](f(b)-f(a))}{(f(b)-f(a))^2}=\frac{bf(b)-af(a)-(F(b)-F(a))}{f(b)-f(a)}.$$

That is exactly the same formula that we got in Theorem \ref{tmbm}. Therefore the same measure will work here as well.
\end{proof}

\begin{prb}One might ask the following question. For a given mean ${\cal{K}}$ can we find a measure $\mu$ on $\mathbb{R}$ such that 
$\displaystyle\frac{\int\limits_a^b\int\limits_a^b {\cal{K}}(x,y) d\mu(x) d\mu(y)}{\mu([a,b])^2}={\cal{K}}(a,b)$? For which means can we expect such measure?
\end{prb}

\section{Behavior in infinity}
It is known that 
$$\lim_{x\to+\infty}\frac{a+b}{2}-\big({\cal H}_p(a+x,b+x)-x\big)=\lim_{x\to+\infty}\frac{(a+x)+(b+x)}{2}-{\cal H}_p(a+x,b+x)=0$$
where ${\cal H}_p(a,b)=\left(\frac{a^p+b^p}{2}\right)^{\frac{1}{p}}$ is the power mean with exponent $p\in\mathbb{R}-\{0\}$, and ${\cal H}_0$ is the geometric mean ($a,b>0$).
I.e. in the far distance a power mean starts to behave as the arithmetic mean. Similarly we can ask when a mean by measure $\Mm$ behaves in the same way, namely
$$\lim_{x\to+\infty}|\Mm(H+x)-\mathrm{Avg}(H+x)|=0\ \ (H\in \mathrm{Dom}(\Mm)).$$
We are going to present a sufficient condition for that.

In this section $\mu$ will denote a p-Borel measure on $\mathbb{R}^+$. 

\begin{df}Let $I\subset\mathbb{R}^+$ be a finite interval.
$$m_I=\inf\left\{\frac{\mu(H)}{\lambda(H)}:H\subset I,H\in \mathrm{Dom}(\Mm)\right\}$$
$$M_I=\sup\left\{\frac{\mu(H)}{\lambda(H)}:H\subset I,H\in \mathrm{Dom}(\Mm)\right\}.$$
\end{df}

\begin{thm}\label{tbii}Let $\mu$ be a p-Borel measure on $\mathbb{R}^+$ such that 

1. if $H$ is $\mu$-measurable, then so is $H+x\ \forall x>0$ and

2. if $I\subset\mathbb{R}^+$ is a finite interval, then $0<m_I\leq M_I<+\infty$ and 
$$\lim_{x\to+\infty}\frac{M_{I+x}}{m_{I+x}}=1.$$ 

Then $H\in \mathrm{Dom}(\Mm)$ implies that $$\lim_{x\to+\infty}|\Mm(H+x)-\mathrm{Avg}(H+x)|=0.$$
\end{thm}
\begin{proof} Let $H\in \mathrm{Dom}(\Mm), x>0$. Let $I\subset\mathbb{R}^+$ be a finite interval such that $H\subset I$.

First let us observe that $H\in \mathrm{Dom}(\Mm)$ implies that $H+x\in \mathrm{Dom}(\Mm) \ \forall x>0$ by the first condition.

Then we get the statement by
\[\frac{m_{I+x}}{M_{I+x}}\mathrm{Avg}(H+x)=
\frac{m_{I+x}\int\limits_{H+x} x d\lambda}{M_{I+x}\lambda(H+x)}\leq
\frac{\int\limits_{H+x} x d\mu}{\mu(H+x)}\leq\]
\[\frac{M_{I+x}\int\limits_{H+x} x d\lambda}{m_{I+x}\lambda(H+x)}=
\frac{M_{I+x}}{m_{I+x}}\mathrm{Avg}(H+x).\qedhere\]
\end{proof}

Now our aim is to prove that the geometric mean satisfies these conditions.

\begin{lem}\label{lcsm}Let $(H_i)$ is a sequence of $\mu$-measurable sets such that $H_i\to H$ in the pseudo-metric $d_{\mu}$. Then $\mu(H_i)\to\mu(H)$. \qed
\end{lem}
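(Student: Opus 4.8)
The plan is to prove the standard estimate $|\mu(H_i)-\mu(H)|\le d_\mu(H_i,H)$ and then let $i\to\infty$. First I would split the symmetric difference into its two disjoint pieces: since $(H_i-H)$ and $(H-H_i)$ are disjoint, additivity of $\mu$ gives
$$d_\mu(H_i,H)=\mu\big((H_i-H)\cup(H-H_i)\big)=\mu(H_i-H)+\mu(H-H_i).$$

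Next I would decompose each set over the common part. Writing $H_i=(H_i\cap H)\cup(H_i-H)$ and $H=(H_i\cap H)\cup(H-H_i)$ as disjoint unions, additivity yields $\mu(H_i)=\mu(H_i\cap H)+\mu(H_i-H)$ and $\mu(H)=\mu(H_i\cap H)+\mu(H-H_i)$. Subtracting the common term $\mu(H_i\cap H)$ cancels it, leaving
$$\mu(H_i)-\mu(H)=\mu(H_i-H)-\mu(H-H_i),$$
so that $|\mu(H_i)-\mu(H)|\le\mu(H_i-H)+\mu(H-H_i)=d_\mu(H_i,H)$. Since $H_i\to H$ in the pseudo-metric $d_\mu$ means exactly $d_\mu(H_i,H)\to 0$, this forces $\mu(H_i)\to\mu(H)$.

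The only subtlety — and essentially the sole obstacle — is that cancelling $\mu(H_i\cap H)$ is legitimate only when this common mass is finite. This is guaranteed in the situation at hand because the relevant sets are bounded subsets of $\mathbb{R}^+$ and $\mu$ assigns finite measure to bounded measurable sets; then $\mu(H_i\cap H)\le\mu(H)<+\infty$ and the subtraction above is valid. Should $\mu(H)=+\infty$ ever need to be accommodated, one observes instead that for large $i$ the finite bound $\mu(H-H_i)\le d_\mu(H_i,H)<+\infty$ forces $\mu(H_i\cap H)=+\infty$, whence $\mu(H_i)=+\infty=\mu(H)$ and the conclusion still holds. Thus the finiteness bookkeeping is the main point requiring care, and it is dispatched by the boundedness hypotheses carried throughout this section.
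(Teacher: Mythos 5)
Your proof is correct. The paper states this lemma without any proof at all (it is marked as immediate), and your argument — the standard estimate $|\mu(H_i)-\mu(H)|\le d_\mu(H_i,H)$ obtained by decomposing over $H_i\cap H$, together with the finiteness bookkeeping justified by the paper's standing assumption that bounded measurable sets have finite $\mu$-measure — is precisely the routine verification the author leaves to the reader.
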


\begin{lem}\label{lmic}Let $\mu$ be absolutely continuous with respect to $\lambda$. Let $(H_i)$ be a sequence of bounded $\lambda$-measurable sets such that $H_i\to H$ in the pseudo-metric $d_{\lambda}$. Let $\lambda(H)>0, H$ be bounded. Then $\frac{\mu(H_i)}{\lambda(H_i)}\to\frac{\mu(H)}{\lambda(H)}$. 
\end{lem}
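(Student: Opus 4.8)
The plan is to prove that the numerator and denominator converge separately and then take the quotient. The denominator is the easy part: from
$$|\lambda(H_i)-\lambda(H)|\leq\lambda\big((H_i-H)\cup(H-H_i)\big)=d_\lambda(H_i,H)\to 0$$
we get $\lambda(H_i)\to\lambda(H)$, and since $\lambda(H)>0$ the values $\lambda(H_i)$ stay bounded away from $0$ for large $i$.

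The substance lies in the numerator, where I would establish $\mu(H_i)\to\mu(H)$. Writing $\mu(H_i)-\mu(H)=\mu(H_i-H)-\mu(H-H_i)$, it suffices to show $\mu\big((H_i-H)\cup(H-H_i)\big)\to 0$; equivalently, that $d_\lambda(H_i,H)\to 0$ forces $d_\mu(H_i,H)\to 0$, after which Lemma \ref{lcsm} yields $\mu(H_i)\to\mu(H)$ directly. The passage from $\lambda$-smallness to $\mu$-smallness is exactly the $\epsilon$--$\delta$ form of absolute continuity noted in the preliminaries: on a bounded interval, $\mu\ll\lambda$ means that for every $\epsilon>0$ there is $\delta>0$ with $\lambda(K)<\delta\Rightarrow\mu(K)<\epsilon$. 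Applying this to $K=(H_i-H)\cup(H-H_i)$, whose $\lambda$-measure tends to $0$, gives $\mu(K)\to 0$. Combining $\mu(H_i)\to\mu(H)$ with $\lambda(H_i)\to\lambda(H)>0$ and continuity of division away from a vanishing denominator then gives $\frac{\mu(H_i)}{\lambda(H_i)}\to\frac{\mu(H)}{\lambda(H)}$.

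The delicate point, which I expect to be the main obstacle, is that the single $\delta$ furnished by $\epsilon$--$\delta$ absolute continuity must serve every symmetric difference $(H_i-H)\cup(H-H_i)$ at once, and this forces all of these sets to lie inside one fixed bounded interval. This is automatic in the intended setting, where $H$ and the approximating $H_i$ sit in a common finite interval $I$, so I would fix such an $I\supset H\cup\bigcup_i H_i$ at the outset and run the $\epsilon$--$\delta$ argument there. Without uniform boundedness the conclusion genuinely fails: for $\mu=\mu_f$ with $f(x)=x^2$, the sets $H=[0,1]$ and $H_i=[0,1]\cup[i,i+\tfrac{1}{i}]$ satisfy $d_\lambda(H_i,H)\to 0$, yet $\mu(H_i-H)=2+\tfrac{1}{i^2}\not\to 0$, so the ratios do not converge to $\mu(H)/\lambda(H)$. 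Hence the real content of the argument is to secure a common bounding interval before invoking absolute continuity.
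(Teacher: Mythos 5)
Your proof is correct and follows the same skeleton as the paper's: convert $d_\lambda$-convergence into $d_\mu$-convergence via absolute continuity, invoke Lemma \ref{lcsm} to get $\mu(H_i)\to\mu(H)$, note $\lambda(H_i)\to\lambda(H)>0$, and divide. Where you go beyond the paper is precisely your ``delicate point'': the paper's proof consists of the single assertion that absolute continuity turns $d_\lambda$-convergence into $d_\mu$-convergence, but this step uses the $\epsilon$--$\delta$ form of absolute continuity, which (as the paper's own preliminaries note) is equivalent to $\mu\ll\lambda$ only after restriction to a bounded set. The lemma's hypotheses make each $H_i$ bounded but not uniformly bounded, so the symmetric differences $(H_i-H)\cup(H-H_i)$ need not lie in any fixed bounded interval, and your counterexample ($\mu=\mu_f$ with $f(x)=x^2$, $H=[0,1]$, $H_i=[0,1]\cup[i,i+\tfrac{1}{i}]$) shows the statement as literally written is false: $d_\lambda(H_i,H)\to 0$ yet $\frac{\mu(H_i)}{\lambda(H_i)}\to 3\ne 1=\frac{\mu(H)}{\lambda(H)}$. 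Your repair --- fixing a common bounded interval $I$ containing $H$ and all the $H_i$ before invoking $\epsilon$--$\delta$ absolute continuity --- is exactly what the intended application supplies, since in Lemma \ref{lmini} all competing sets sit inside the fixed finite interval $I$. So your write-up is not only correct in the setting where the lemma is actually used; it also documents a hypothesis (uniform boundedness of the $H_i$) that the paper's statement and one-line proof silently omit.
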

\begin{proof} By absolute continuity $H_i\to H$ according to $d_{\mu}$ as well. Then by Lemma \ref{lcsm} $\mu(H_i)\to\mu(H)$ and $\lambda(H_i)\to\lambda(H)$.
\end{proof}

\begin{lem}\label{lmini}Let $\mu$ is absolutely continuous with respect to $\lambda$ and vica versa. Let $I\subset\mathbb{R}^+$ be a finite interval. Then
\[m_I=\inf\bigg\{\frac{\mu(K)}{\lambda(K)}:K=\bigcup_{i=1}^n I_i,  I_i\subset I\text{ is an interval}, I_i\cap I_j=\emptyset\ (i\ne j),\]
\[I_i\in \mathrm{Dom}(\Mm)\ (i,j\in\mathbb{N})\bigg\}.\]
A similar statement holds for $M_I$ with $\sup$.
\end{lem}
\begin{proof} First let us observe that $\mathrm{Dom}(\Mm)=\mathrm{Dom}({\cal{M}}^{\lambda})$.

Let $m$ denote the right hand side of the above equality. Obviously $m_I\leq m$.

Let $H\subset I,H\in \mathrm{Dom}(\Mm),\varepsilon>0$. Then by \cite{billings} 12.3 there are countably many disjoint (open) intervals $(I_i)$ such that $H\subset\cup_1^{\infty}I_i$ and $\sum_1^{\infty}\lambda(I_i)<\lambda(H)+\varepsilon$. Then we can choose $n\in\mathbb{N}$ such that $\lambda(H)-\varepsilon<\sum_1^{n}\lambda(I_i)<\lambda(H)+\varepsilon$. Therefore we can construct a sequence $(K_i)$ such that $K_i$ is a disjoint union of finitely many intervals and $\lambda(K_i)\to\lambda(H)$. By Lemma \ref{lmic} $\frac{\mu(K_i)}{\lambda(K_i)}\to\frac{\mu(H)}{\lambda(H)}$.

The proof of the statement for $M_I$ is analogous.
\end{proof}

\begin{prp}\label{pgeomsatl}The Borel measure $\mu$ associated to the geometric mean satisfies the conditions of Theorem \ref{tbii}.
\end{prp}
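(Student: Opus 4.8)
The plan is to work directly with the explicit density of the measure $\mu$ attached to the geometric mean. From the corresponding example we have $\frac{d\mu}{d\lambda}=f'(x)=\frac{1}{2e^2}x^{-3/2}$, a function that is continuous, strictly positive and strictly decreasing on $\mathbb{R}^+$; these three features are exactly what the three hypotheses of Theorem \ref{tbii} require, so the whole proof reduces to reading them off.

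First I would compute $m_I$ and $M_I$ for a compact interval $I=[\alpha,\beta]$ with $0<\alpha<\beta$. For every $H\subset I$ with $H\in Dom(\Mm)$ the ratio $\frac{\mu(H)}{\lambda(H)}=\frac{1}{\lambda(H)}\int_H f'\,d\lambda$ is just the $\lambda$-average of $f'$ over $H$, hence it lies between $\inf_I f'=f'(\beta)$ and $\sup_I f'=f'(\alpha)$. Taking $H$ to be a short subinterval near $\beta$ (respectively near $\alpha$) and using continuity of $f'$, this average can be made to converge to $f'(\beta)$ (respectively $f'(\alpha)$); Lemma \ref{lmini} lets one restrict to such finite unions of intervals if one prefers. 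Thus $m_I=f'(\beta)=\frac{1}{2e^2}\beta^{-3/2}$ and $M_I=f'(\alpha)=\frac{1}{2e^2}\alpha^{-3/2}$, so $0<m_I\leq M_I<+\infty$, which is the first hypothesis.

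For the limiting condition I would simply substitute: with $I+x=[\alpha+x,\beta+x]$ the constant $\frac{1}{2e^2}$ cancels and
$$\frac{M_{I+x}}{m_{I+x}}=\frac{f'(\alpha+x)}{f'(\beta+x)}=\left(\frac{\beta+x}{\alpha+x}\right)^{3/2}\longrightarrow 1\qquad(x\to+\infty),$$
since $\frac{\beta+x}{\alpha+x}\to1$. The last hypothesis is the easiest: $\mu$ is absolutely continuous with respect to $\lambda$ with a strictly positive density, so its measurable sets coincide (up to completion) with the Lebesgue measurable subsets of $\mathbb{R}^+$, and Lebesgue measurability is translation invariant, whence $H$ measurable implies $H+x$ measurable for all $x>0$.

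I do not anticipate a genuine difficulty; the one point deserving care is the identification $m_I=\inf_I f'$ and $M_I=\sup_I f'$, i.e.\ that the $\lambda$-averages of a continuous monotone density over measurable subsets of $I$ exhaust precisely the range of the density on $I$. A second, more bookkeeping, point is that one must take the intervals $I$ to be bounded away from $0$ (so that $M_I<+\infty$), which is automatic for the translates $I+x$ with large $x$ and is forced for the original $I\supset H$ by $\mu(H)<+\infty$.
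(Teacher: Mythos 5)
Your proof is correct and lands on the same endpoint values as the paper ($m_{I+x}=f'(d+x)$, $M_{I+x}=f'(c+x)$ with $f'(t)=\frac{1}{2e^2}t^{-3/2}$), but it gets there by a genuinely different route. The paper never argues through the density for general sets: it computes $\mu([a,b])/\lambda([a,b])=\frac{1}{e^2}\frac{1}{\sqrt{ab}(\sqrt{a}+\sqrt{b})}$ explicitly for single intervals, shows by induction that adjoining disjoint intervals to the left can only raise the ratio (so finite unions never beat a single interval at the right end), and then invokes Lemma \ref{lmini} --- whose whole purpose is to let one compute $m_I$ using only finite unions of intervals --- to pass to arbitrary $H\in Dom(\Mm)$. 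You replace all of this with the observation that $\frac{\mu(H)}{\lambda(H)}=\frac{1}{\lambda(H)}\int_H f'\,d\lambda$ is a $\lambda$-average of a continuous decreasing density, hence trapped between $f'(\beta)$ and $f'(\alpha)$, with attainment by short intervals at the endpoints. This is shorter, bypasses both the induction and Lemma \ref{lmini}, generalizes verbatim to any measure with a continuous, strictly positive, monotone density, and you also verify the translation-measurability hypothesis (via the coincidence of $\mu$- and $\lambda$-null sets), which the paper's proof passes over in silence. The only price is that you must know $\mu(H)=\int_H f'\,d\lambda$ for all $\mu$-measurable $H$, not just intervals; this is standard (the two measures agree on intervals, hence on Borel sets, hence on the common completion), but it deserves a sentence.

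One side remark of yours is wrong, though it does not affect the main argument: $\mu(H)<+\infty$ does \emph{not} force $H$ (hence an interval $I\supset H$) to be bounded away from $0$. A sparse set accumulating at $0$, e.g. $H=\bigcup_{n}[4^{-n},4^{-n}+16^{-n}]$, satisfies $0<\mu(H)<+\infty$, and then every finite interval containing $H$ has infimum $0$ and $M_I=+\infty$. So the hypothesis $M_I<+\infty$ genuinely fails for intervals touching $0$; both your proof and the paper's implicitly read ``finite interval $I\subset\mathbb{R}^+$'' as a compact $[c,d]$ with $c>0$. For the translates $I+x$ with $x>0$ the problem disappears, which is all that the proof of Theorem \ref{tbii} actually uses.
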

\begin{proof} Let $I=[c,d]$. Let us calculate $m_{I+x}$.

If $a,b\in I+x,a<b$, then $\mu([a,b])=\frac{1}{e^2}\left(\frac{1}{\sqrt{a}}-\frac{1}{\sqrt{b}}\right)=\frac{1}{e^2}\frac{b-a}{\sqrt{ab}(\sqrt{a}+\sqrt{b})}$. We get that $\frac{\mu([a,b])}{\lambda([a,b])}=\frac{1}{e^2}\frac{1}{\sqrt{ab}(\sqrt{a}+\sqrt{b})}$. If we want its infimum for $a,b$, then $a,b$  have to tend to $d+x$. Therefore for one interval the infimum is $\frac{1}{e^2}\frac{1}{2(d+x)\sqrt{d+x}}$.

We want to show that if $I_1,\dots,I_n\subset I,\ \sup I_i<\inf I_j\ (i<j)$, then $\frac{\mu(I_1)+\dots+\mu(I_n)}{\lambda(I_1)+\dots+\lambda(I_n)}>\frac{\mu(I_n)}{\lambda(I_n)}$. We go by induction: 
Let $I_1,I_2\subset I,\ \sup I_1<\inf I_2$. Then evidently $\frac{\mu(I_1)+\mu(I_2)}{\lambda(I_1)+\lambda(I_2)}>\frac{\mu(I_2)}{\lambda(I_2)}$ holds because $\frac{\mu(I_1)}{\lambda(I_1)}>\frac{\mu(I_2)}{\lambda(I_2)}$. 
Let us assume that $\frac{\mu(I_1)+\dots+\mu(I_{n-1})}{\lambda(I_1)+\dots+\lambda(I_{n-1})}>\frac{\mu(I_{n-1})}{\lambda(I_{n-1})}$ holds whenever $\sup I_i<\inf I_j\ (i<j)$. Clearly $\frac{\mu(I_1)+\dots+\mu(I_n)}{\lambda(I_1)+\dots+\lambda(I_n)}>\frac{\mu(I_n)}{\lambda(I_n)}$ is equivalent with $\frac{\mu(I_1)+\dots+\mu(I_{n-1})}{\lambda(I_1)+\dots+\lambda(I_{n-1})}>\frac{\mu(I_n)}{\lambda(I_n)}$ but by $\frac{\mu(I_{n-1})}{\lambda(I_{n-1})}>\frac{\mu(I_n)}{\lambda(I_n)}$ it holds.

Hence by Lemma \ref{lmini} we get that $m_{I+x}=\frac{1}{e^2}\frac{1}{2(d+x)\sqrt{d+x}}$. Similarly $M_{I+x}=\frac{1}{e^2}\frac{1}{2(c+x)\sqrt{c+x}}$. Finally
\[\lim_{x\to+\infty}\frac{M_{I+x}}{m_{I+x}}=\lim_{x\to+\infty}\frac{(d+x)\sqrt{d+x}}{(c+x)\sqrt{c+x}}=
\lim_{x\to+\infty}\frac{(\frac{d}{x}+1)\sqrt{\frac{d}{x}+1}}{(\frac{c}{x}+1)\sqrt{\frac{c}{x}+1}}=1.\qedhere\]
\end{proof}

\begin{thm}Let $\mu$ be the Borel measure associated to the geometric mean. Then $H\in \mathrm{Dom}(\Mm)$ implies that \[\lim_{x\to+\infty}|\Mm(H+x)-\mathrm{Avg}(H+x)|=0.\]
\end{thm}
\begin{proof}\ref{pgeomsatl}, \ref{tbii}.
\end{proof}


{\footnotesize


\noindent 
email: alosonczi1@gmail.com\\
}

\end{document}